\font\smallit=cmti10
\renewcommand\section{\@startsection {section}{1}{\z@}
{-30pt \@plus -1ex \@minus -.2ex}
{2.3ex \@plus.2ex}
{\normalfont\normalsize\bfseries\boldmath}}
\renewcommand\subsection{\@startsection{subsection}{2}{\z@}
{-3.25ex\@plus -1ex \@minus -.2ex}
{1.5ex \@plus .2ex}
{\normalfont\normalsize\bfseries\boldmath}}
\renewcommand{\@seccntformat}[1]{\csname the#1\endcsname. }
\newtheorem{theorem}{Theorem}
\newtheorem{lemma}{Lemma}
\newtheorem{prop}{Proposition}
\newtheorem{cor}{Corollary}
\theoremstyle{definition}
\newtheorem{definition}{Definition}
\newtheorem{remark}{Remark}
\newtheorem{example}{Example}
\newcommand{\citep}{\cite}
\newcommand{\cf}[0]{\textit{cf}.\ } 
\renewcommand{\emph}[1]{\textit{#1}} 
\newcommand{\gkpSI}[2]{\ensuremath{\genfrac{\lbrack}{\rbrack}{0pt}{}{#1}{#2}}} 
\newcommand{\gkpSII}[2]{\ensuremath{\genfrac{\lbrace}{\rbrace}{0pt}{}{#1}{#2}}}
\setlist[itemize]{leftmargin=0.35in}
\newcommand{\tagsleft}[0]{\let\veqno\@@leqno}
\newcommand{\trianglenk}[2]{$\diagbox{#1}{#2}$}
\newcommand{\Hn}[2]{
     \ifthenelse{\equal{#2}{1}}{H_{#1}}{H_{#1}^{\left(#2\right)}}
}
\DeclareMathOperator{\Sum}{Sum} 
\begin{document}

\begin{center}
\uppercase{\bf Combinatorial Sums and Identities Involving Generalized Sum-of-Divisors Functions with  
               Bounded Divisors}
\vskip 20pt
{\bf Maxie Dion Schmidt} \\
{\smallit School of Mathematics, Georgia Institute of Technology, Atlanta, GA 30332, USA} \\
{\tt mschmidt34@gatech.edu, maxieds@gmail.com}
\end{center}
\vskip 20pt
\centerline{\smallit Received: , Revised: , Accepted: , Published: } 
\vskip 30pt

\centerline{\bf Abstract}
\noindent
The class of Lambert series generating functions (LGFs) denoted by  
$L_{\alpha}(q)$ formally enumerate the 
generalized sum-of-divisors functions, $\sigma_{\alpha}(n) = \sum_{d|n} d^{\alpha}$, 
for all integers $n \geq 1$ and fixed real-valued parameters $\alpha \geq 0$. 
We prove new formulas expanding the higher-order derivatives of these LGFs. 
The results we obtain are combined to express new identities expanding the 
generalized sum-of-divisors functions. These new identities are expanded in the form of 
sums of polynomially scaled multiples of a related class of divisor sums depending on 
$n$ and $\alpha$. 

\vskip 30pt

\section{Introduction} 

\subsection{Generating the generalized sum-of-divisors functions} 

For any indeterminate $q \in \mathbb{C}$ 
satisfying $|q| < 1$ and any artihmetic function $f$, we have that 
\[
[q^n]\left(\sum_{i=1}^{n} \frac{f(i) q^i}{1-q^i}\right) = \sum_{d|n} f(d), 
\]
where the right-hand-side sum is indexed over all divisors $d$ of the $n \geq 1$. 
The identity in the previous equation is correct because whenever $|q| < 1$, the 
next infinite series converges absolutely, and the following expansions are equivalent: 
\begin{align*} 
\sum_{i \geq 1} \frac{f(i) q^i}{1-q^i} & = \sum_{m \geq 1} \sum_{i \geq 1} f(i) q^{mi} 
     = \sum_{n \geq 1} \left[\sum_{mi=n} f(i)\right] q^n. 
\end{align*} 
We treat the Lambert series, $L_{\alpha}(q)$, corresponding to the special case where 
$f(n) := n^{\alpha}$ for $\alpha \in \mathbb{R}$ 
formally in our context as the generating functions of 
an important sequence of classical number theoretic functions defined by the divisor sums 
$\sigma_{\alpha}(n) := \sum_{d|n} d^{\alpha}$ \cite{HARDY-WRIGHT}. 
In particular, for any fixed $\alpha \in \mathbb{R}$, we have that \citep[\S 27.7]{NISTHB} 
\begin{equation} 
\label{eqn_LAlphaq_LSeries_exp_def_v1}
L_{\alpha}(q) := \sum_{n \geq 1} \frac{n^{\alpha} q^n}{1-q^n} = 
     \sum_{m \geq 1} \sigma_{\alpha}(m) q^m, |q| < 1. 
\end{equation}
In this article we are interested in proving new properties of the functions 
$\sigma_{\alpha}(n)$, for real $\alpha \geq 0$, through algebraic operations on and 
new combinatorially motivated identities of the generating functions $L_{\alpha}(q)$. 
Our new approach is to use identities expanding the higher order derivatives of the 
Lambert series generating functions of the \emph{generalized sum-of-divisors functions}, 
$\sigma_{\alpha}(n)$, to prove new identities satisfied by these functions. 
Note that when $\alpha < 0$, by symmetry we can recover formulas for the 
generalized sum-of-divisors functions as 
$\sigma_{-\alpha}(n) = \sigma_{\alpha}(n) \cdot n^{-\alpha}$. 

\subsubsection{Comparisons to known convolution formulas} 

The special cases given by the 
\emph{divisor function}, $d(n) \equiv \sigma_0(n)$, and the ordinary classical 
\emph{sum-of-divisors function}, $\sigma(n) \equiv \sigma_1(n)$, are of much 
interest in modern and traditional number theory.
Some recent generating-function-based approaches to enumerating identities for divisor 
functions are found in the references \citep{SEGLADUN-GFS,JNT-ANEWLOOK}. 
There are several known forms of divisor sum convolution identities 
of the form 
\begin{align} 
\label{eqn_CvlType_Idents_for_SODFuncs} 
\sigma_{\alpha}(n) & = c_1\left((a_1n+a_2) \sigma_{\beta}(n) + a_3 
     \sigma_{\gamma}(n) + a_4 \times \sum_{k=1}^{n-1} 
     \sigma_{\gamma}(k) \sigma_{\beta}(n-k) 
     \right), 
\end{align} 
including those found in the references for the triples 
$$(\alpha, \beta, \gamma) \in \left\{ (3,1,1), (5,3,1), (7,5,1), (9,7,1), (9,5,3) \right\}.$$ 
The special case convolution formulas 
result from identities and functional equations satisfied by 
Eisenstein series in the context of modular forms 
\citep{ELEMEVALCVLSUMS2002,ONARITHFNS-RAMANUJAN}. 
For example, the following two convolution identities are known relating special cases of the 
generalized sum-of-divisors functions: 
\begin{align} 
\label{eqn_cvl_idents_examples} 
\sigma_3(n) & = \frac{1}{5}\left(6n \cdot \sigma_1(n) - \sigma_1(n) + 12 \times 
     \sum_{k=1}^{n-1} \sigma_1(k) \sigma_1(n-k)\right) \\ 
\notag 
\sigma_5(n) & = \frac{1}{21}\left(10(3n-1) \cdot \sigma_3(n) + \sigma_1(n) + 
     240 \times \sum_{k=1}^{n-1} \sigma_1(k) \sigma_3(n-k)\right). 
\end{align} 
Other convolution identities related to the divisor function and sum-of-divisors 
function are proved in \citep{JNT-NEWCVLSDN}. 
In contrast, we focus on proving new expansions involving 
sums of polynomial multiples of 
generalized divisor sums and so-termed bounded-index divisor functions in place of the 
discrete convolutions of these functions in the forms of the identities cited in 
\eqref{eqn_CvlType_Idents_for_SODFuncs} and \eqref{eqn_cvl_idents_examples}. 

\subsection{Natural interpretations by bounded-index divisor functions} 

The new identities and closed-form expansions we derive in this article shed some new 
light on how we can reconcile more combinatorial expansions of the 
series in \eqref{eqn_LAlphaq_LSeries_exp_def_v1} 
with properties of the so-called \emph{bounded-index divisor functions} 
defined in \eqref{eqn_modified_divisorFns_with_bdd_divs} and 
\eqref{eqn_BoundedIndex_DivisorFuncs_def_v2} below. 
Namely, we define the following variants of the generalized sum-of-divisors functions 
for any positive integers $n,k,m \geq 1$: 
\begin{align} 
\label{eqn_modified_divisorFns_with_bdd_divs} 
B_{k,m}(\alpha; n) := 
     \sum_{\substack{d|n \\ d \leq \left\lfloor \frac{n}{m} \right\rfloor}} 
     \binom{\frac{n}{d}-m+k}{k} d^{\alpha}. 
\end{align} 
It turns out that the definitions of these modified, or binomially scaled 
bounded-index, versions of the classical divisor functions $\sigma_{\alpha}(n)$, 
appear naturally in formal power series manipulations of the generating functions 
$L_{\alpha}(q)$ that generate the classical sequences. 

In particular, we can take expansions of the left-hand-side of 
\eqref{eqn_BinCvlDivFn_as_CoeffsOfLSeries_exp} below as 
geometric and binomial series in powers of $q$. These expansions 
imply that the functions $B_{k,m}(\alpha; n)$ defined by 
\eqref{eqn_modified_divisorFns_with_bdd_divs} 
correspond to the following series coefficient formulas:
\begin{align} 
\label{eqn_BinCvlDivFn_as_CoeffsOfLSeries_exp} 
[q^n] \sum_{i \geq 1} \frac{i^{\alpha} q^{mi}}{(1-q^i)^{k+1}} & = 
     \sum_{\substack{d|n \\ d \leq \left\lfloor \frac{n}{m} \right\rfloor}} 
     \binom{\frac{n}{d}-m+k}{k} d^{\alpha},\ 
     \alpha \geq 0, m \in \mathbb{Z}^{+}, k \in \mathbb{N}. 
\end{align} 
Thus the definitions of the parameterized sequences in 
\eqref{eqn_modified_divisorFns_with_bdd_divs} lead us to a more natural definition for 
bounded-index variants of the classical sum-of-divisors functions. 
To be precise, we will find formulas based on 
\eqref{eqn_BinCvlDivFn_as_CoeffsOfLSeries_exp} 
for the $\sigma_{\alpha}(n)$ expressed as 
quasi-polynomially scaled combinations of the particular variants of the 
bounded-index divisor functions defined in the following equation:
\begin{align}
\label{eqn_BoundedIndex_DivisorFuncs_def_v2} 
\sigma_{\alpha,m}(n) := \sum_{\substack{d|n \\ d \leq \left\lfloor \frac{n}{m} \right\rfloor}} 
     d^{\alpha}, n \geq 1; 1 \leq m \leq n; \alpha \in \mathbb{R}. 
\end{align} 
To state the next results, we adopt the double-indexed bracket notation for the unsigned 
Stirling numbers of the first and second kinds. 
Other common notation for the Stirling number triangles is given in 
\cite[\S 26.8]{NISTHB} as 
$\gkpSI{n}{k} = (-1)^{n-k} s(n,k)$ and 
$\gkpSII{n}{k} = S(n,k)$ for non-negative integers $n, k \geq 0$ such that 
$0 \leq k \leq n$. 
Then by the binomial theorem and the known identity by which we can expand the 
single factorial function in terms of the Stirling numbers of the first kind as 
\cite[\cf \S 6.1]{GKP} 
\[
n! = \sum_{m=0}^{n} \gkpSI{n}{m} (-1)^{n-m} n^m, \mathrm{\ for\ } n \geq 0, 
\]
we also easily prove that \cite[\S 6.1]{GKP} 
\begin{align*} 
\sum_{\substack{d|n \\ d \leq \left\lfloor \frac{n}{m} \right\rfloor}} & 
     \binom{\frac{n}{d}-m+k}{k} d^{\alpha} \\ 
     & = 
     \sum_{\substack{d|n \\ d \leq \left\lfloor \frac{n}{m} \right\rfloor}} \left[ 
     \sum_{j=0}^{k+1} \sum_{r=0}^{j-1} \gkpSI{k+1}{j} \binom{j-1}{r} \frac{m^{j-1-r} \cdot 
     (-1)^{j-1-r}}{k!} \times 
     \left(\frac{n}{d}\right)^r d^{\alpha} 
     \right] \\ 
     & = \sum_{r=0}^{k} \sum_{j=r}^{k} \gkpSI{k+1}{j+1} \binom{j}{r} \frac{m^{j-r} \cdot 
     (-1)^{j-r}}{k!} \times 
     n^r \cdot \sigma_{\alpha-r,m}(n). 
\end{align*} 
The expansions in the previous two equations then motivate our uses of the functions 
$\sigma_{\alpha,m}(n)$, in expanding higher-order derivatives of 
\eqref{eqn_BinCvlDivFn_as_CoeffsOfLSeries_exp} as suggested by the identities stated in 
Lemma~\ref{lemma_formulas_for_special_series_coeffs} of the next subsection. 
For reference, a table of particular values of $\sigma_{\alpha,m}(n)$ for 
$(n, m) \in \mathbb{N} \times \mathbb{N}$, when the 
fixed symbolic parameter $\alpha$ remains unevaluated in the resulting expressions, 
is found in Table~\ref{table_bdd-divisor_divsumfns_listings}. 

\begin{table}[ht!] 
\centering\tiny 
\begin{equation*} 
\begin{array}{|c|l|l|l|l|} \hline 
\trianglenk{n}{m} & 1 & 2 & 3 & 4 \\ \hline\hline 
1 & 1 & 0 & 0 & 0 \\
2 & 1+2^{\alpha } & 1 & 0 & 0 \\
3 & 1+3^{\alpha } & 1 & 1 & 0 \\
4 & 1+2^{\alpha }+4^{\alpha } & 1+2^{\alpha } & 1 & 1 \\
5 & 1+5^{\alpha } & 1 & 1 & 1 \\
6 & 1+2^{\alpha }+3^{\alpha }+6^{\alpha } & 1+2^{\alpha }+3^{\alpha } & 1+2^{\alpha } & 1 \\
7 & 1+7^{\alpha } & 1 & 1 & 1 \\
8 & 1+2^{\alpha }+4^{\alpha }+8^{\alpha } & 1+2^{\alpha }+4^{\alpha } & 1+2^{\alpha } & 1+2^{\alpha } \\
9 & 1+3^{\alpha }+9^{\alpha } & 1+3^{\alpha } & 1+3^{\alpha } & 1 \\
10 & 1+2^{\alpha }+5^{\alpha }+10^{\alpha } & 1+2^{\alpha }+5^{\alpha } & 1+2^{\alpha } & 1+2^{\alpha } \\
11 & 1+11^{\alpha } & 1 & 1 & 1 \\
12 & 1+2^{\alpha }+3^{\alpha }+4^{\alpha }+6^{\alpha }+12^{\alpha } & 1+2^{\alpha }+3^{\alpha }+4^{\alpha }+6^{\alpha } & 1+2^{\alpha }+3^{\alpha }+4^{\alpha
   } & 1+2^{\alpha }+3^{\alpha } \\
13 & 1+13^{\alpha } & 1 & 1 & 1 \\
14 & 1+2^{\alpha }+7^{\alpha }+14^{\alpha } & 1+2^{\alpha }+7^{\alpha } & 1+2^{\alpha } & 1+2^{\alpha } \\
15 & 1+3^{\alpha }+5^{\alpha }+15^{\alpha } & 1+3^{\alpha }+5^{\alpha } & 1+3^{\alpha }+5^{\alpha } & 1+3^{\alpha } \\
16 & 1+2^{\alpha }+4^{\alpha }+8^{\alpha }+16^{\alpha } & 1+2^{\alpha }+4^{\alpha }+8^{\alpha } & 1+2^{\alpha }+4^{\alpha } & 1+2^{\alpha }+4^{\alpha } \\
\hline\hline 
\end{array}
\end{equation*} 

\caption{The bounded-index divisor sum functions, $\sigma_{\alpha,m}(n)$} 
\label{table_bdd-divisor_divsumfns_listings} 

\end{table} 

\subsection{Combinatorial lemmas expanding series coefficients of LGFs} 

We will prove the next few results rigorously in 
Section~\ref{Section_appendix_proofs_of_lemmas}. 
For now, we will motivate how we derived 
the more technical multiple summation identities for the 
sum-of-divisor functions, which we will state as 
main theorems later in the article. 

\begin{lemma}[A pair of utility sums]  
\label{lemma_SumOverjPowqj_SumIdents_v2} 
For any fixed non-zero $q$ and integers $p \geq 0$ and $n \geq p$, we have the 
following two identities:
\begin{align*} 
\tag{i} 
\sum_{j=0}^n \frac{j!}{(j-p)!} q^j & = \frac{1}{(1-q)^{p+1}}\left( 
     p! \cdot q^p + \sum_{k=0}^p \binom{p}{k} \frac{(-1)^{k+1} (n+1)! q^{n+k+1}}{ 
     (n-p)! (n+1-p+k)}\right) \\ 
\tag{ii} 
\sum_{j=0}^n \frac{(j+1)!}{(j+1-p)!} q^j & = \frac{1}{(1-q)^{p+1}}\left( 
     p! \cdot q^{p-1} + \sum_{k=0}^p \binom{p}{k} \frac{(-1)^{k+1} (n+2)! q^{n+k+1}}{ 
     (n+1-p)! (n+2-p+k)}\right). 
\end{align*} 
\end{lemma} 

\begin{lemma}[Formulas for special series coefficients] 
\label{lemma_formulas_for_special_series_coeffs} 
For any fixed $\alpha \geq 0$ and integers $s \geq 0$, we have each of the 
following series coefficient formulas: 
\begin{align*} 
\tag{i} 
[q^x] q^s D^{(s)}\left[\frac{L_{\alpha}(q)}{1-q}\right] 
     &  = 
     \sum_{r=0}^s \sum_{k=1}^x 
     \binom{s}{r} \binom{x-k}{s-r} \frac{(s-r)! k!}{(k-r)!} \sigma_{\alpha}(k) \\ 
\tag{ii} 
[q^x] q^s D^{(s)}\left[\sum_{i \geq 1} \frac{i^{\alpha-1} q^i}{(1-q)^2} 
     \right] 
     &  = 
     \sum_{r=0}^s \sum_{k=0}^x 
     \binom{s}{r} \binom{x-k+1}{s-r+1} \frac{(s-r+1)! k^{\alpha-1} \cdot k!}{ 
     (k-r)!} \\ 
\tag{iii} 
[q^x] q^s D^{(s)}\left[\sum_{i \geq 1} \frac{i^{\alpha-1} q^i}{1-q} 
     \right] 
     &  = 
     \sum_{r=0}^s \sum_{k=0}^x 
     \binom{s}{r} \binom{x-k}{s-r} \frac{(s-r)! k^{\alpha-1} \cdot k!}{ 
     (k-r)!}. 
\end{align*} 
\end{lemma} 

\begin{lemma}[Higher-order derivatives of Lambert series] 
\label{lemma_sthDerivsOfLambertSeries} 
For any fixed non-zero $q \in \mathbb{C}$ such that $|q| < 1$, $n \in \mathbb{Z}^{+}$, and 
integer $s \geq 0$, we have the following results: 
\begin{align} 
\tag{i} 
q^s D^{(s)}\left[\frac{q^n}{1-q^n}\right] & = 
     \sum_{m=0}^s \sum_{k=0}^m \gkpSI{s}{m} \gkpSII{m}{k} 
     \frac{(-1)^{s-k} k! \cdot n^m \cdot q^n}{(1-q^n)^{k+1}} \\ 
\tag{ii} 
q^s D^{(s)}\left[\frac{q^n}{1-q^n}\right] & = 
     \sum_{r=0}^s \left( 
     \sum_{m=0}^s \sum_{k=0}^m \gkpSI{s}{m} \gkpSII{m}{k} \binom{s-k}{r} 
     \frac{(-1)^{s-k-r} k! \cdot n^m}{(1-q^n)^{k+1}} 
     \right) q^{(r+1)n}. 
\end{align} 
\end{lemma} 

\subsection{Motivating new identities for 
            Lambert series generating functions} 

We can generate the generalized 
sum-of-divisors functions by considering only the terms in the following truncated 
series identities (see Lemma~\ref{lemma_LambertSeriesTerm_Sumjim2_Exps}): 
\begin{align} 
\notag 
\sigma_{\alpha}(n) & = [q^n] \sum_{i=1}^n \frac{i^{\alpha} q^i}{1-q^i} \\ 
\label{eqn_key_LSeries_exp_ident} 
     & = 
     [q^n]\left(\sum_{i=1}^n \left[\frac{i^{\alpha-1} q^i}{1-q} + 
     \sum_{j=0}^{i-2} \frac{i^{\alpha-1} q^{i+j} (i-1-j) (1-q)}{(1-q^i)}\right]  
     \right). 
\end{align} 
In this form, we see that we may evaluate the inner sum exactly as 
\begin{align*} 
\sum_{j=0}^{i-2} q^{i+j} (i-1-j) (1-q) & = 
     \frac{q^{2i}}{1-q} + \frac{(i-1)q^i-i q^{i+1}}{1-q}. 
\end{align*} 
We then have an immediate corollary to an identity involving the bounded divisor sum functions 
from \eqref{eqn_modified_divisorFns_with_bdd_divs}. We can see that 
\begin{align*} 
\sigma_{\alpha}(n) & = \sum_{k=1}^n \left[k^{\alpha-1} + \sigma_{\alpha-1,2}(k) + 
     \sigma_{\alpha}(k) - \sigma_{\alpha-1}(k)-\sigma_{\alpha}(k-1) 
     \right]. 
\end{align*} 
It is clear that $\sigma_{\alpha,2}(n) = \sigma_{\alpha}(n) - n^{\alpha}$ 
for all $n \geq 1$. In this case, we have so far not recovered any new information 
about the generalized sum-of-divisors functions $\sigma_{\alpha}(n)$. 
We can still 
continue reasoning in this way by expanding the Lambert series generating function 
partial sums using the next 
higher-order derivative identity involving the Stirling number triangles 
for any fixed integers $s \geq 1$ in the following two forms: 
\begin{align} 
\label{eqn_SNumDblSum_iPowm_sthDerivFormula} 
q^s D^{(s)}\left[\frac{q^i}{1-q^i}\right] & = 
     \sum_{m=0}^s \sum_{k=0}^m \gkpSI{s}{m} \gkpSII{m}{k} 
     \frac{(-1)^{s-k} k! \cdot i^m}{(1-q^i)^{k+1}} \\ 
\notag 
     & = \sum_{r=0}^s \left( 
     \sum_{m=0}^s \sum_{k=0}^m \gkpSI{s}{m} \gkpSII{m}{k} \binom{s-k}{r} 
     \frac{(-1)^{s-k-r} k! \cdot i^m}{(1-q^i)^{r+1}} 
     \right) q^{(r+1)i}. 
\end{align} 
Applying the identity in the previous pair of equations from 
\eqref{eqn_SNumDblSum_iPowm_sthDerivFormula} inductively, 
we can expand the expressions for $\sigma_{\alpha}(n)$ clearly by multiple summations. 

\subsubsection{Remarks} 

The bounded-index variations of the classical divisor functions, 
$\sigma_{\alpha,m}(n)$, are non-trivial objects themselves. 
That is to say, that these functions reveal at least as much deep information as the 
classical sum-of-divisors functions.  
We will see next that we can use the functions $\sigma_{\alpha,m}(n)$ 
to re-write the coefficients of the LGF-type generating functions of 
$\sigma_{\alpha}(n)$ by expansions combining the 
forms of the bounded-index functions in the spirit of 
\eqref{eqn_CvlType_Idents_for_SODFuncs}. 

We note that the initial forms of the new formulas we give next are 
at least in part motivated by experimental mathematics with summations and 
algebraic formal manipulations of polynomials. That these proofs are rigorously 
justified though an approach to this subject using a modern 
computer algebra system such as \emph{Mathematica} is in no small part 
responsible for the author's initial discovery of the new identities. 
Since the 
$N$-order accurate partial sums of the Lambert series, $L_{\alpha}(q)$, 
accurately generate the $\sigma_{\alpha}(n)$ as coefficients of an ordinary power series 
expansion for any $1 \leq n \leq N$, we are able to forego almost all 
considerations of the convergence of the $L_{\alpha}(q)$ as an analytic object. 

The key to interpreting the theorems stated and proved  
in Section~\ref{Section_DefsStmtsOfMainResults} is to abstract the 
technical nature of the resulting coefficients. 
Instead of focusing on the problem of multiple finite 
summations (which is easily conquered in light of modern 
CAS platforms and simplifications via finite summation packages), we should 
consider the ways these identities relay interesting new substructural variants 
of the generalized sum-of-divisor functions expressed by familiar combinatorial 
sequences and constructions. 
We also can apply mechanical 
summation identities to simplify nested sums involving Stirling numbers and 
hypergeometric function terms, an application that typically mitgates the initially 
complicated appearance of the coefficients involved in these formulas. 

\subsection{Characteristic examples of the new results} 

In what follows, we build up successive notation to prove the corresponding formulas 
for the coefficients of the quasi-linear combinations for the $\sigma_{\alpha}(n)$. 
It is key to keep the simplifications into more explicit direct 
expansions, like those given below in 
Example~\ref{example_dn_sigman_formula_exps_s2} and 
Example~\ref{example_Intro2} below, in mind when evaluating the significance of the 
more general results stated in the next section. 

\begin{example}[Special cases for classical divisor functions] 
\label{example_dn_sigman_formula_exps_s2} 
For the familiar explicit cases of $\alpha := 0, 1$ corresponding to the 
divisor function, $d(n)$, and the (ordinary) sum-of-divisors function, 
$\sigma(n)$, respectively, we obtain the next expansions. 
This leads to the statement of a few 
special case identities that convey the characteristic nature of the 
expansions we can expect more generally from the main theorems in 
Section~\ref{Section_DefsStmtsOfMainResults}. 
An easy corollary of what we prove in the next sections defines these classical 
cases of interest 
according to the sums 
\begin{align*} 
\binom{x}{2} d(x) & = \rho_2^{(0)}(x) + \sum_{k=1}^{x-1} \tau_{2}^{(0)}(k) \\ 
\binom{x}{2} \sigma(x) & = \rho_2^{(1)}(x) + \sum_{k=1}^{x-1} \tau_{2}^{(1)}(k), 
\end{align*} 
where the component functions in these summation-based formulas have 
explicit representations given in 
\eqref{eqn_TauRho_Idents_for_s2Alpha01} below. 
\begin{align} 
\label{eqn_TauRho_Idents_for_s2Alpha01} 
\tau_{2}^{(0)}(k) & = 
\frac{1}{4} \left((3k-2) \sigma_0(k)-(k-1) \sigma_1(k)-\sigma_2(k)\right) \\ 
\notag 
     & \phantom{=\frac{1}{4}\ } + 
     \frac{1}{4} \left(k^2 \sigma _{-2,3}(k)+k (k-3) \sigma _{-1,3}(k) - 
     (3k-2) \sigma _{0,3}(k) + 2 \sigma _{1,3}(k)\right) \\ 
\notag 
\rho_{2}^{(0)}(x) & = 
     \frac{1}{4} \left(\left(2 x^2 + x - 2\right) \sigma_0(x)- 
     (x-1) \sigma_1(x)-\sigma_2(x)\right) \\ 
\notag 
     & \phantom{=\frac{1}{4}\ } + 
     \frac{1}{4} \left(x^2 \sigma_{-2,3}(x)+x(x-3) \sigma_{-1,3}(x) - 
     (3x-2) \sigma_{0,3}(x)+2\sigma_{1,3}(x)\right) \\ 
\notag 
\tau_{2}^{(1)}(k) & = 
     \frac{1}{4} \left((3-k) k \sigma_0(k)+2(k-1) \sigma_1(k)-2\sigma_2(k)\right) \\ 
\notag 
     & \phantom{=\frac{1}{4}\ } + 
     \frac{1}{4} \left(k^2 \sigma_{-1,3}(k)+(k-3) k \sigma_{0,3}(k) - 
     (3k-2) \sigma_{1,3}(k)+ 2\sigma_{2,3}(k)\right) \\ 
\notag 
\rho_{2}^{(1)}(x) & = 
\frac{1}{4}\left( \left(x^2-1\right) \sigma_1(x)-x(x-3) \sigma_0(x)- 
     2\sigma_2(x)\right) \\ 
\notag 
     & \phantom{=\frac{1}{4}\ } + 
     \frac{1}{4}\left(x^2 \sigma_{-1,3}(x) + x(x-3) \sigma_{0,3}(x) - 
     (3x-2) \sigma_{1,3}(x)+ 2\sigma_{2,3}(x)\right). 
\end{align} 
We simplify our resulting 
finite sum expansions using the next identities 
to find expansions that follow by taking the order $s := 2$ derivatives of 
$L_{\alpha}(q)$ 
(compare Table~\ref{table_bdd-divisor_divsumfns_listings} on 
page~\pageref{table_bdd-divisor_divsumfns_listings}): 
\begin{equation*} 
\sigma_{\alpha,1}(n) = \sigma_{\alpha}(n), 
     \qquad \text{ \ \textrm{and} \ } \qquad 
\sigma_{\alpha,2}(n) = \sigma_{\alpha}(n) - n^{\alpha},\ 
     \mathrm{for\ all\ } n \geq 1.  
\end{equation*} 
\end{example} 

For the higher-order derivative cases where $s, \alpha \geq 2$, 
a corresponding set of component functions (parameterized in $\alpha$) is defined 
to state the more general formulas compared to the prior 
summations where $\alpha \in \{0,1\}$ in the last example. 

\begin{example}[Generalization to symbolic parameters, $\alpha \in \mathbb{R}$] 
\label{example_Intro2} 
For integers $s \geq 2$, we will denote the analogous coefficients by 
$\tau_{s,x}^{(\alpha)}(k)$ and $\rho_s^{(\alpha)}(x)$. 
The exact polynomial expansions in $x$ and the parameters $(s,k)$ that result have 
growing complexity that should roughly be expressable by closed-form 
representations of classically Stirling-like polynomials. 
Here, we may write the generalized sum-of-divisors functions for any fixed parameter 
$\alpha \in \mathbb{C}$ in the form of 
\[
\binom{x}{s} \sigma_{\alpha}(x) = 
      \rho_s^{(\alpha)}(x) + \sum_{k=1}^{x-1} \tau_{s,x}^{(\alpha)}(k), 
\]
for natural numbers $x \geq 2$. 
Then we have the next expansions of the corresponding 
coefficient functions defined (and then summed exactly) as 
\begin{align} 
\notag 
\tau_{2,x}^{(\alpha)}(k) & = 
\frac{1}{4} \left(k^2 \sigma_{\alpha-2 ,3}(k)+k (k-3) \sigma_{\alpha-1 ,3}(k)- 
     (3k-2) \sigma_{\alpha ,3}(k)+2\sigma_{\alpha+1 ,3}(k)\right) \\ 
\notag 
     & \phantom{=\frac{1}{4}\ } - 
     \frac{1}{4} \left(k^2 \sigma_{\alpha-2 }(k) - 
     k(k-3) \sigma_{\alpha-1 }(k)+(3k-2) \sigma_{\alpha }(k) - 
     2\sigma_{\alpha+1 }(k)\right) \\ 
\notag 
\rho_{2}^{(\alpha)}(x) & = 
\frac{1}{4} \left(x^2 \sigma _{\alpha -2,3}(x)+ 
     x(x-3) \sigma _{\alpha -1,3}(x)-(3x-2) \sigma _{\alpha ,3}(x)+ 
     2 \sigma _{\alpha +1,3}(x)\right) \\ 
\notag 
     & \phantom{=\frac{1}{4}\ } - 
     \frac{1}{4} \left(x^2 \sigma _{\alpha -2}(x)- 
     \left(2 x^2+x-2\right) \sigma _{\alpha }(x)+x(x-3) \sigma _{\alpha -1}(x)\right) \\ 
\notag 
     & \phantom{=\frac{1}{4}\ } - 
     \frac{1}{2}\sigma_{\alpha +1}(x) \\ 
\notag 
\tau_{3,x}^{(\alpha)}(k) & = 
     -\frac{1}{6} k^3 \sigma _{\alpha -3,3}(k)+ 
     \frac{1}{18} k^3 \sigma _{\alpha -3,4}(k)- 
     \left(\frac{k^2}{6}-\frac{11 k}{12}+
     \frac{1}{3}\right) \sigma _{\alpha ,4}(k) \\ 
\notag 
     & \phantom{=\ } + 
     \left(\frac{3 k^2}{4}-\frac{1}{12} k (9 x+17)+ 
     \frac{x}{2}\right) \sigma _{\alpha ,3}(k)+ 
     \left(\frac{k^3}{12}-\frac{k^2}{3}\right) \sigma _{\alpha -2,4}(k) \\ 
\notag 
     & \phantom{=\ } + 
     \left(\frac{k^3}{36}-\frac{k^2}{2}+ 
     \frac{11 k}{18}\right) \sigma _{\alpha -1,4}(k)+ 
     \left(\frac{1}{4} k^2 (x+2)- 
     \frac{k^3}{3}\right) \sigma _{\alpha -2,3}(k) \\ 
\notag 
     & \phantom{=\ } + 
     \left(-\frac{k^3}{6}+ 
     \frac{1}{4} k^2 (x+5)- 
     \frac{1}{12} k (9 x+4)\right) \sigma _{\alpha -1,3}(k) \\ 
\notag 
     & \phantom{=\ } + 
     \left(\frac{11 k}{36}-\frac{1}{2}\right) \sigma _{\alpha +1,4}(k) \\ 
\notag 
     & \phantom{=\ } + 
     \frac{1}{2} \sigma _{\alpha +2,3}(k)- 
     \frac{1}{6} \sigma _{\alpha +2,4}(k)+ 
     \left(\frac{x+1}{2}-\frac{13 k}{12}\right) \sigma _{\alpha +1,3}(k) \\ 
\notag 
     & \phantom{=\ } + 
     \frac{1}{9} k^3 \sigma _{\alpha -3}(k) - 
     \left(\frac{7 k^2}{12}-\frac{1}{4} k (3 x+2)+ 
     \frac{1}{6} (3 x-2)\right) \sigma _{\alpha }(k) \\
\notag 
     & \phantom{=\ } + 
     \left(\frac{k^3}{4}- 
     \frac{1}{12} k^2 (3 x+2)\right) \sigma _{\alpha -2}(k) \\ 
\notag 
     & \phantom{=\ } + 
     \left(\frac{5 k^3}{36}-\frac{1}{4} k^2 (x+3)+ 
     \frac{1}{36} k (27 x-10)\right) \sigma _{\alpha -1}(k)- 
     \frac{1}{3} \sigma _{\alpha +2}(k) \\ 
\notag 
     & \phantom{=\ } + 
     \left(\frac{7 k}{9}- 
     \frac{x}{2}\right) \sigma _{\alpha +1}(k) \\ 
\notag 
\rho_{3}^{(\alpha)}(x) & = 
     -\frac{1}{6} x^3 \sigma _{\alpha -3,3}(x)+ 
     \frac{1}{18} x^3 \sigma _{\alpha -3,4}(x)- 
     \frac{1}{36} (x-18) x^2 \sigma _{\alpha -2,3}(x) \\ 
\notag 
     & \phantom{=\ } + 
     \frac{1}{12} (x-4) x^2 \sigma _{\alpha -2,4}(x) \\ 
\notag 
     & \phantom{=\ } + 
     \frac{1}{6} \left(x^2+2 x-2\right) x \sigma _{\alpha -1,3}(x)+ 
     \frac{1}{36} \left(x^2-18 x+22\right) x \sigma _{\alpha -1,4}(x) \\ 
\notag 
     & \phantom{=\ } + 
     \frac{1}{36} \left(x^2-9 x-29\right) x \sigma _{\alpha ,3}(x)- 
     \frac{1}{12} \left(2 x^2-11 x+4\right) \sigma _{\alpha ,4}(x) \\ 
\notag 
     & \phantom{=\ } - 
     \frac{1}{12} (x-1) (x+6) \sigma _{\alpha +1,3}(x)+ 
     \frac{1}{36} (11 x-18) \sigma _{\alpha +1,4}(x) \\ 
\notag 
     & \phantom{=\ } + 
     \frac{1}{18} (x+9) \sigma _{\alpha +2,3}(x) - 
     \frac{1}{6} \sigma _{\alpha +2,4}(x)+ 
     \frac{1}{9} x^3 \sigma _{\alpha -3}(x) \\ 
\notag 
     & \phantom{=\ } - 
     \frac{1}{18} (x+3) x^2 \sigma _{\alpha -2}(x) - 
     \frac{1}{36} \left(7 x^2-6 x+10\right) x \sigma _{\alpha -1}(x) \\ 
\notag 
     & \phantom{=\ } + 
     \frac{1}{36} \left(5 x^3-3 x^2+8 x+12\right) \sigma _{\alpha }(x)+ 
     \frac{1}{36} (3 x+4) x \sigma _{\alpha +1}(x) \\ 
\label{eqn_TauRho_Idents_for_s23Alpha} 
     & \phantom{=\ } - 
     \frac{1}{18} (x+6) \sigma _{\alpha +2}(x). 
\end{align} 
\end{example} 

The most general statements that express the behavior of the expansions for 
$\sigma_{\alpha}(n)$ of this type are given in 
Theorem~\ref{theorem_FiniteSum_MainInitialThmStmt} and 
Corollary~\ref{cor_LHSSigmaAlphan_Exact_Formula_exprs}. 
The reductions to coefficients as 
finite-degree polynomials of $x$ demonstrated by 
Example~\ref{example_dn_sigman_formula_exps_s2} and 
Example~\ref{example_Intro2} 
serve as a key indication 
for why these complicated coefficient expressions are still 
worthwhile and significant to study as new algebraic identities in relation to the classical 
functions, $\sigma_{\alpha}(n)$. 

\section{Construction of the main results} 
\label{Section_DefsStmtsOfMainResults} 

\subsection{Definitions} 
\label{subSection_Defs} 

\begin{definition} 
\label{def_Multiple_Init_Coeff_Sums} 
For integers $s \geq 1$ and $r,p,m,u,n,w \geq 0$, we define the 
following single and multiple coefficient sums expanded by 
\begin{align*} 
C_{4,s}^{(\alpha)}(r, p, m) & := 
     \sum_{k=0}^m \binom{s}{r} \gkpSI{s-r}{m} \gkpSII{m}{k} \binom{s-r-k}{p} 
     (-1)^{s-r-k+p} k! \cdot r! \\ 
C_{8,s}^{(\alpha)}(r, p, n, w) & := 
     \sum_{m=0}^{s-r} \sum_{k=0}^m 
     \sum_{m_1=0}^{r+1-n} \sum_{m_2=0}^n 
     \binom{s}{r} \gkpSI{s-r}{m} \gkpSII{m}{k} \binom{s-r-k}{p} \gkpSI{r+1-n}{m_1} \times \\
     & \phantom{:=\sum\ } \times 
     \gkpSI{n}{m_2} 
     \binom{m_2}{w-m-m_1} 
     (-1)^{s-k+p+n+m_1+m_2} k! \times \\ 
     & \phantom{:=\sum\ } \times 
     (n-2-r)^{m+m_1+m_2-w} \\ 
C_{42,s}^{(\alpha)}(x, k; p, m, u) & := 
     \sum_{r=0}^s \sum_{j=0}^{s-r} \binom{x-k}{r} \gkpSI{s-r}{j} \binom{j}{u} 
     \frac{(-1)^{s-r-u} (p+1-s+r)^{j-u} k^u}{(s-r)!} \times \\ 
     & \phantom{:=\sum\ } \times 
     C_{4,s}^{(\alpha)}(r, p, m) \\ 
C_{43,s}^{(\alpha)}(x, k; p, m, u) & := 
     \sum_{r=0}^s \sum_{j=0}^{s-r} \binom{x+1-k}{r+1} \gkpSI{s-r}{j} \binom{j}{u} \times \\ 
     & \phantom{:=\sum\ } \times 
     \frac{(-1)^{s-r-u} (r+1) (p+1-s+r)^{j-u} k^u}{(s-r)!} 
     C_{4,s}^{(\alpha)}(r, p, m) \\ 
C_{82,s}^{(\alpha)}(x, k; w, p, u) & := 
     \sum_{r=0}^s \sum_{n=0}^{r+1} \sum_{j=0}^{s-r} 
     \binom{x+1-n-k+r}{r} \binom{r}{n} \gkpSI{s-r}{j} \binom{j}{u} \times \\ 
     & \phantom{:=\sum\sum\sum\ } \times 
     \frac{(-1)^{s-r-u} (p+2-s+r)^{j-u} k^u}{(s-r)!} 
     C_{8,s}^{(\alpha)}(r, p, n, w) \\ 
C_{83,s}^{(\alpha)}(x, k; w, p, u) & := 
     \sum_{r=0}^s \sum_{n=0}^{r+1} \sum_{j=0}^{s-r} 
     \binom{x+2-n-k+r}{r+1} \binom{r+1}{n} \gkpSI{s-r}{j} \binom{j}{u} \times \\ 
     & \phantom{:=\sum\sum\sum\ } \times 
     \frac{(-1)^{s-r-u} (p+2-s+r)^{j-u} k^u}{(s-r)!} 
     C_{8,s}^{(\alpha)}(r, p, n, w). 
\end{align*} 
\end{definition} 

\begin{definition} 
\label{def_Sum4sqi_Sum8sqi_init_defs} 
For fixed integers $s, i \geq 1$ and an indeterminate series parameter $|q| < 1$, 
we define the next two primary component sums as follows: 
\begin{align*} 
\Sum_{4,s}(q, i) & := \sum_{0 \leq r, p, m \leq s} 
     \frac{q^{(p+1)i+r}}{(1-q^i)^{s-r+1}} \left( 
     \frac{i^{m+1}}{(1-q)^{r+1}} - \frac{(r+1) i^m}{(1-q)^{r+2}} \right) 
     C_{4,s}(r, p, m), \\ 
\Sum_{8,s}(q, i) & := \sum_{0 \leq r, p \leq s} \sum_{n=0}^{r+1} \sum_{w=0}^s 
     \frac{q^{(p+1)i+n-1}}{(1-q^i)^{s-r+1}} \times \\ 
     & \phantom{:=\sum\sum\sum\sum\ } \times 
     \left( 
     \binom{r}{n} \frac{1}{(1-q)^{r+1}} - \binom{r+1}{n} \frac{1}{(1-q)^{r+2}} 
     \right) \times \\ 
     & \phantom{:=\sum\sum\sum\sum\ } \times 
     i^w C_{8,s}(r, p, n, w). 
\end{align*} 
\end{definition} 

\begin{definition} 
\label{def_SeriesCoeffs_ShorthandNotation} 
For fixed real $\alpha \geq 0$ and integers $s, x \geq 1$ we define the following 
coefficients of several intermediate power series expansions of the 
Lambert series expansions from the introduction: 
\begin{align*} 
\tag{i} 
L_{s,x}^{(\alpha)} & := 
     [q^x] q^s D^{(s)}\left[\frac{L_{\alpha}(q)}{1-q}\right] \\ 
\tag{ii} 
S_{00,s,x}^{(\alpha)} & := 
     [q^x] q^s D^{(s)}\left[\sum_{i \geq 1} \frac{i^{\alpha-1} q^i}{(1-q)^2} 
     \right] \\ 
\tag{iii} 
S_{01,s,x}^{(\alpha)} & := 
     [q^x] q^s D^{(s)}\left[\sum_{i \geq 1} \frac{i^{\alpha-1} q^i}{1-q} 
     \right] \\ 
S_{4,s,x}^{(\alpha)} & := [q^x] \sum_{i \geq 1} \Sum_{4,s}(q, i) i^{\alpha-1} \\ 
S_{8,s,x}^{(\alpha)} & := [q^x] \sum_{i \geq 1} \Sum_{8,s}(q, i) i^{\alpha-1}. 
\end{align*} 
In the corollaries of the new results given in later subsections 
of the article, we also employ the shorthand notation of 
$S_{48,s,x}^{(\alpha)} := S_{4,s,x}^{(\alpha)} + S_{8,s,x}^{(\alpha)}$.  
\end{definition} 

By Lemma~\ref{lemma_formulas_for_special_series_coeffs}, 
we have the following explicit formulas for the first three tagged 
coefficient functions from 
Definition~\ref{def_SeriesCoeffs_ShorthandNotation}: 
\begin{align*} 
\tag{i} 
L_{s,x}^{(\alpha)} & = 
     \sum_{r=0}^s \sum_{k=1}^x 
     \binom{s}{r} \binom{x-k}{s-r} \frac{(s-r)! k!}{(k-r)!} \sigma_{\alpha}(k) \\ 
\tag{ii} 
S_{00,s,x}^{(\alpha)} & = 
     \sum_{r=0}^s \sum_{k=0}^x 
     \binom{s}{r} \binom{x-k+1}{s-r+1} \frac{(s-r+1)! k^{\alpha-1} \cdot k!}{ 
     (k-r)!} \\ 
\tag{iii} 
S_{01,s,x}^{(\alpha)} & = 
     \sum_{r=0}^s \sum_{k=0}^x 
     \binom{s}{r} \binom{x-k}{s-r} \frac{(s-r)! k^{\alpha-1} \cdot k!}{ 
     (k-r)!}. 
\end{align*} 

\subsection{Statements of the new formulas} 
\label{subSection_prop_KeyProp_sthDerivsOfLSeriesExps}

\begin{prop}[Higher-order derivatives of LGF expansions] 
\label{prop_KeyProp_sthDerivsOfLSeriesExps} 
For a fixed indeterminate series parameter $|q| < 1$, 
any real-valued $\alpha \geq 0$, and any integers 
$s \geq 1$, we have that 
\begin{align*} 
q^s D^{(s)}\left[\frac{1}{1-q} \times \sum_{i \geq 1} 
     \frac{i^{\alpha} q^i}{1-q^i}\right] & = 
     q^s D^{(s)}\left[\sum_{i \geq 1} 
     \frac{i^{\alpha} q^i}{(1-q)^2}\right] \\ 
     & \phantom{=\quad\ } + 
     \sum_{i \geq 1} \left(\Sum_{4,s}(q, i) + 
     \Sum_{8,s}(q, i)\right) i^{\alpha-1}. 
\end{align*} 
Hence, for all integers $s, x \geq 1$, we have that 
\begin{align} 
\label{eqn_prop_KeyProp_sthDerivsOfLSeriesExps_v2} 
L_{s,x}^{(\alpha)} & = S_{00,s,x}^{(\alpha)} + S_{4,s,x}^{(\alpha)} + 
     S_{8,s,x}^{(\alpha)}, 
\end{align} 
\end{prop} 

The proof of Proposition~\ref{prop_KeyProp_sthDerivsOfLSeriesExps} is 
somewhat involved and requires the machinery of several lemmas we will prove 
in the next section. For this reason we delay the proof of this key result 
until Section~\ref{subSection_ProofOfKeyLSProp}. 
The consequence stated in \eqref{eqn_prop_KeyProp_sthDerivsOfLSeriesExps_v2} 
follows trivially from the first result in the proposition. 
In particular, we immediately arrive at the second formula 
since the left-hand-side of the series in the leading equation is equal to $q^s$ times the 
$s^{th}$ derivative of the generating function, $L_{\alpha}(q) (1-q)^{-1}$. 

\begin{cor} 
\label{cor_S4sx_S8sx_exps} 
For any integer $x \geq 1$, we have the following expansions: 
\begin{align*} 
\tag{i} 
S_{4,s,x}^{(\alpha)} & = \sum_{0 \leq r, p, m \leq s} \sum_{k=1}^{x-r} \Biggl( 
     \binom{x-k}{r} B_{s-r,p+1}(m+\alpha; k) \\ 
     & \phantom{=\sum\sum\sum\Biggl(\quad\ } - 
     \binom{x+1-k}{r+1}(r+1) B_{s-r,p+1}(m+\alpha-1; k)\Biggr) 
     C_{4,s}^{(\alpha)}(r, p, m); \\ 
S_{8,s,x}^{(\alpha)} & = \sum_{0 \leq r, p, w \leq s} \sum_{n=0}^{r+1} 
     \sum_{k=1}^{x+1-n} \Biggl(\binom{x+1-n-k+r}{r} \binom{r}{n} \\ 
     & \phantom{\sum_{0 \leq r, p, w \leq s} \sum_{n=0}^{r+1} 
     \sum_{k=1}^{x+1-n} \Biggl(\quad\ } - 
     \binom{x+2-n-k+r}{r+1} \binom{r+1}{n}\Biggr) \times \\ 
\tag{ii} 
     & \phantom{=\sum\sum\sum\sum\Biggl(\qquad\ } \times 
     C_{8,s}^{(\alpha)}(r, p, n, w) B_{s-r,p+2}(w+\alpha-1; k). 
\end{align*} 
\end{cor}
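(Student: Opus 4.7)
The plan is to read off the coefficient $[q^x]$ of each of $\sum_{i\geq 1}\Sum_{4,s}(q,i)\,i^{\alpha-1}$ and $\sum_{i\geq 1}\Sum_{8,s}(q,i)\,i^{\alpha-1}$ via the Cauchy product, splitting each summand into a ``rational'' factor in $q$ alone and a ``Lambert-series'' factor whose coefficient is handed to us directly by the key identity \eqref{eqn_BinCvlDivFn_as_CoeffsOfLSeries_exp} from the introduction. Once this is set up, both expansions fall out by pure index manipulation.

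For $S_{4,s,x}^{(\alpha)}$, I would first exchange the order of summation so that the sum over $i$ sits inside the finite sums over $r,p,m$, and rewrite the generic summand as
\[
C_{4,s}^{(\alpha)}(r,p,m)\!\left(\frac{q^r}{(1-q)^{r+1}}\cdot\!\sum_{i\geq 1}\!\frac{i^{m+\alpha}\,q^{(p+1)i}}{(1-q^i)^{s-r+1}}\;-\;(r+1)\frac{q^r}{(1-q)^{r+2}}\cdot\!\sum_{i\geq 1}\!\frac{i^{m+\alpha-1}\,q^{(p+1)i}}{(1-q^i)^{s-r+1}}\right).
\]
The coefficient $[q^x]$ of each of these products is then a convolution. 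The standard negative-binomial-series expansions give $[q^{x-k}]\frac{q^r}{(1-q)^{r+1}}=\binom{x-k}{r}$ and $[q^{x-k}]\frac{q^r}{(1-q)^{r+2}}=\binom{x-k+1}{r+1}$, while applying \eqref{eqn_BinCvlDivFn_as_CoeffsOfLSeries_exp} with parameters $k\mapsto s-r$, $m\mapsto p+1$, and exponent $m+\alpha$ or $m+\alpha-1$ identifies the $[q^k]$ of the respective Lambert-series factor with $B_{s-r,p+1}(m+\alpha;k)$ or $B_{s-r,p+1}(m+\alpha-1;k)$. Summing over the convolution index $k$ (and noting that terms beyond the indicated upper limit contribute zero since the rational-factor binomial vanishes) and then over $r,p,m$ gives the first claimed expansion.

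The computation for $S_{8,s,x}^{(\alpha)}$ is parallel, with only the following differences: the shift factor is $q^{(p+1)i+n-1}$, the two rational pieces carry coefficients $\binom{r}{n}$ and $\binom{r+1}{n}$ with denominators $(1-q)^{r+1}$ and $(1-q)^{r+2}$, and the weight is $i^{w+\alpha-1}$ after multiplying by $i^{\alpha-1}$. The same coefficient-extraction recipe, now absorbing the $q^{n-1}$ shift into the rational binomial, produces the factors $\binom{x+1-n-k+r}{r}\binom{r}{n}$ and $\binom{x+2-n-k+r}{r+1}\binom{r+1}{n}$ multiplying the Lambert coefficient $B_{s-r,p+2}(w+\alpha-1;k)$, exactly as stated, after collecting the sums over $r,p,n,w,k$.

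The main obstacle is pure bookkeeping: correctly tracking the index shifts introduced by the $q^r$, $q^{n-1}$, and $q^{(p+1)i}$ factors so that they fold cleanly into the outer binomial coefficients and the parameters of $B_{k,m}(\alpha;n)$, and verifying that the Cauchy-product manipulations are formally legitimate (they are, since only finitely many values of $i$ contribute to $[q^x]$ of each inner sum and $|q|<1$ ensures analytic convergence). Once the substitutions $(s-r,\,p+1,\,m+\alpha)$, $(s-r,\,p+1,\,m+\alpha-1)$, and $(s-r,\,p+2,\,w+\alpha-1)$ for $(k,m,\alpha)$ in \eqref{eqn_BinCvlDivFn_as_CoeffsOfLSeries_exp} are performed, both identities drop out immediately with no additional ingredients beyond the key Lambert-series coefficient formula already in hand.
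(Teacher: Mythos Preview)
Your proposal is correct and follows essentially the same route as the paper: split each term of $\Sum_{4,s}(q,i)$ and $\Sum_{8,s}(q,i)$ into a rational factor in $q$ alone and a Lambert-series factor, apply the Cauchy product, use the negative-binomial expansion for the rational piece, and invoke \eqref{eqn_BinCvlDivFn_as_CoeffsOfLSeries_exp} for the Lambert piece to produce the $B_{k,m}(\alpha;n)$ terms, followed by the appropriate index shift (by $n-1$ in the $S_{8}$ case). The paper in fact only writes out the $S_{8,s,x}^{(\alpha)}$ case and leaves $S_{4,s,x}^{(\alpha)}$ as an exercise, so your more explicit treatment of the first identity is a welcome addition; one small point of care is that the exponent on $q$ in $\Sum_{8,s}$ appears as $(p+1)i+n-1$ in Definition~\ref{def_Sum4sqi_Sum8sqi_init_defs} but as $(p+2)i+n-1$ in the proof of Proposition~\ref{prop_KeyProp_sthDerivsOfLSeriesExps}, and it is the latter that matches the $B_{s-r,p+2}$ in the corollary, so you should use $(p+2)i$ when carrying out the substitution into \eqref{eqn_BinCvlDivFn_as_CoeffsOfLSeries_exp}.
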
 

We prove only the second result stated in 
(ii) of Corollary \ref{cor_S4sx_S8sx_exps} and 
leave the details of the full argument to the proof of (i) as a 
related exercise. 
Since we employ the 
result in \eqref{eqn_BinCvlDivFn_as_CoeffsOfLSeries_exp} 
to justify these expansions in our argument, 
we first sketch a proof of this identity. 

\begin{proof}[Proof of \eqref{eqn_BinCvlDivFn_as_CoeffsOfLSeries_exp}]  
For fixed integers $k \geq 0$ and 
$m,i \geq 1$, consider the expansion of the following terms through the 
known binomial series identity where we select $|q| < 1$ by assumption 
\cite[\S 5.4]{GKP}: 
\begin{align*} 
L_{i,k,m}(q) & := \frac{f(i) q^{mi}}{(1-q^i)^{k+1}} = 
     f(i) \times \sum_{j=0}^{\infty} \binom{j+k}{k} q^{(m+j)i}. 
\end{align*} 
Since we sum over all $i \geq 1$, the coefficients of $q^{n}$ in the 
full power series expansion of $L_{i,k,m}(q)$ in $q$ about zero 
must satisfy the integer relation that 
$(m+j) i = n$. 
This equation naturally leads to expressing the coefficients of $q^n$ as 
a sum over the divisors $i$ of $n$ with summands weighted by the coefficients 
expanded above. 

To conclude the proof of the identity in 
\eqref{eqn_BinCvlDivFn_as_CoeffsOfLSeries_exp}, 
we must then (I) solve for the input $j = \frac{n}{i}-m$ depending on the other 
fixed integer parameters which allows us to express the inputs to the 
binomial coefficient terms in the 
resulting coefficient divisor sums; and then 
(II) notice that the divisors $i$ of $n$ are positive integers 
bounded by $0 < i = \frac{n}{m+j} \leq \frac{n}{m}$. 
These two steps imply that     
\begin{align*} 
[q^n] \sum_{d \geq 1} L_{d,k,m}(q) & = 
     \sum_{\substack{d|n \\ d \leq \lfloor \frac{n}{m} \rfloor}} 
     \binom{\frac{n}{d}-m+k}{k} f(d). 
\end{align*} 
The identity we cited in 
\eqref{eqn_BinCvlDivFn_as_CoeffsOfLSeries_exp} of the introduction follows 
as the special case where we set $f(n) := n^{\alpha}$ for some real-valued 
parameter $\alpha \geq 0$. 
\end{proof} 
\begin{proof}[Proof of (ii) in Corollary~\ref{cor_S4sx_S8sx_exps}] 
We apply \eqref{eqn_BinCvlDivFn_as_CoeffsOfLSeries_exp} 
to our function, $\Sum_{8,s}(q, i)$, 
defined in the last subsection along with the known Cauchy product 
formula (finite summation formula) for the coefficients of the 
convolution of two ordinary generating functions in $q$ to obtain the 
claimed result. 
More precisely, we see that for $r_0 := 0, 1$ we have a difference of 
coefficients of the form 
\begin{align*} 
\sum_{r,p,n,w \geq 0} & \sum_{i \geq 1} \binom{r+r_0}{n} 
     \frac{i^{w+\alpha-1} q^{(p+1)i+n-1}}{ 
     (1-q^i)^{s-r+1} \times (1-q)^{r+1+r_0}} \\ 
     & = 
     \sum_{\substack{r,p,n,w \geq 0 \\ i \geq 1, x \geq 0}} 
     \sum_{k=0}^x \binom{r+r_0}{n} \binom{x-k+r+r_0}{r+r_0} 
     B_{s-r,p+1}(w+\alpha-1; k) \cdot q^{x+n-1}. 
\end{align*} 
If $G(q)$ denotes the ordinary generating function of the sequence 
$\{g_n\}_{n \geq 0}$, then for any integers $x > n \geq 0$ we have 
by standard operations on power series that 
$[q^x] q^{n-1} G(q) = g_{x+1-n}$. 
So we can shift the indices of the coefficients of $q^x$ by $n-1$ 
in the previous equation to obtain the complete proof of the 
second expansion stated in (ii) of the corollary above. 
\end{proof} 

Under the assumption of 
Proposition~\ref{prop_KeyProp_sthDerivsOfLSeriesExps}, 
we are able to state and prove precise expansions of 
$L_{s,x}^{(\alpha)}$ via the formula in 
\eqref{eqn_prop_KeyProp_sthDerivsOfLSeriesExps_v2}. 
What results is the next theorem that 
generalizes the form of the two examples we gave in the introduction 
for any fixed integers $s \geq 0$ and real parameters $\alpha \geq 0$.  

\begin{theorem}[Expansions by the bounded divisor sum functions] 
\label{theorem_FiniteSum_MainInitialThmStmt} 
Suppose that $\alpha \geq 0$ and $s \geq 0$ is a fixed integer. 
For all $x \geq 1$, we have the following expansions: 
\begin{align*} 
S_{4,s,x}^{(\alpha)} & = \sum_{0 \leq p,m,u \leq s} \sum_{k=1}^x \left( 
     C_{42,s}^{(\alpha)}(x, k; p, m, u) - C_{43,s}^{(\alpha)}(x, k; p, m+1, u)\right) 
     \sigma_{m+\alpha-u,p+1}(k) \\ 
     & \phantom{=\ } - 
     \sum_{0 \leq p, u \leq s} \sum_{k=1}^x C_{43,s}^{(\alpha)}(x, k; p, 0, u) 
     \sigma_{\alpha-1-u,p+1}(k) \\ 
S_{8,s,x}^{(\alpha)} & = \sum_{0 \leq p,u,w \leq s} \sum_{k=1}^{x+1} 
     \left(C_{82,s}^{(\alpha)}(x, k; w, p, u) - C_{83,s}^{(\alpha)}(x, k; w, p, u) 
     \right) \sigma_{w+\alpha-1-u,p+2}(k). 
\end{align*} 
\end{theorem}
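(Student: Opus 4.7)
The plan is to reduce the theorem to the preceding corollary by expanding each binomial divisor sum $B_{s-r,p+1}(\beta;k)$ (respectively $B_{s-r,p+2}(\beta;k)$) as a finite linear combination of bounded divisor sums $\sigma_{\beta-u,p+1}(k)$ (respectively $\sigma_{\beta-u,p+2}(k)$). This conversion is purely algebraic and rests on a single polynomial expansion of the binomial coefficient $\binom{k/d-m+N}{N}$ in the variable $z = k/d$. Specifically, I would first establish the identity
\[
\binom{z-P}{N} \;=\; \frac{1}{N!}\sum_{j=0}^{N}\sum_{u=0}^{j}(-1)^{N-u}\,\gkpSI{N}{j}\binom{j}{u} P^{\,j-u}\, z^{u},
\]
which follows by writing $\binom{z-P}{N} = (z-P)^{\underline{N}}/N!$ as a falling factorial, using the signed Stirling-number expansion $(z-P)^{\underline{N}} = \sum_{j}(-1)^{N-j}\gkpSI{N}{j}(z-P)^{j}$, and then applying the ordinary binomial theorem inside each factor $(z-P)^{j}$.

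Setting $z = k/d$, $N = s-r$, and $P = p+1-s+r$ in this identity (respectively $P = p+2-s+r$ for the $S_{8}$ case) and multiplying by $d^{\beta}$ yields
\[
\binom{k/d-(p+1)+(s-r)}{s-r}\,d^{\beta} \;=\; \frac{1}{(s-r)!}\sum_{j,u}(-1)^{s-r-u}\gkpSI{s-r}{j}\binom{j}{u}(p+1-s+r)^{j-u}k^{u}d^{\beta-u}.
\]
Summing the right-hand side over divisors $d\mid k$ with $d\leq\lfloor k/(p+1)\rfloor$ converts $B_{s-r,p+1}(\beta;k)$ into a finite linear combination whose generic term is a scalar multiple of $k^{u}\sigma_{\beta-u,p+1}(k)$. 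Substituting $\beta = m+\alpha$ and $\beta = m+\alpha-1$ into the first representation of the corollary (and $\beta = w+\alpha-1$ into the second), multiplying by the outer prefactors $\binom{x-k}{r}$, $\binom{x+1-k}{r+1}(r+1)$, $\binom{x+1-n-k+r}{r}\binom{r}{n}$, and $\binom{x+2-n-k+r}{r+1}\binom{r+1}{n}$, and then summing over the surviving index $r$ (together with the $j$-sum from the identity above) produces the four auxiliary coefficient sums $C_{42,s}^{(\alpha)}, C_{43,s}^{(\alpha)}, C_{82,s}^{(\alpha)}, C_{83,s}^{(\alpha)}$ exactly as given in Definition~\ref{def_Multiple_Init_Coeff_Sums}.

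The remaining work is organizational. In the $S_{4}$ formula the two pieces of the corollary contribute to divisor sums with shifted subscripts ($m+\alpha-u$ versus $m+\alpha-1-u$), so I would re-index $m \mapsto m+1$ in the second piece so that both pieces contribute to a common $\sigma_{m+\alpha-u,p+1}(k)$. This alignment leaves an unpaired residue at $m=0$ from the second piece, which supplies precisely the isolated $\sigma_{\alpha-1-u,p+1}(k)$ sum displayed at the end of the theorem statement; there is no analogous separation for the $S_{8}$ case, which is notationally heavier but structurally simpler since both of its pieces share the common exponent $\beta = w+\alpha-1$. Extending the $k$-sum's upper limit from the tighter range of the corollary to the range stated in the theorem is harmless because the outer binomial coefficients vanish on the additional terms. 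I expect the main obstacle to be purely bookkeeping: tracking the four nested index variables, the sign $(-1)^{s-r-u}$, and the power $(p+1-s+r)^{j-u}$ (respectively $(p+2-s+r)^{j-u}$) carefully enough to verify that the assembled coefficients match the $C$-functions of Definition~\ref{def_Multiple_Init_Coeff_Sums} without any off-by-one errors.
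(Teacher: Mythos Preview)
Your proposal is correct and follows essentially the same route as the paper: both expand the binomial coefficient $\binom{t-m+k}{k}$ (your $\binom{z-P}{N}$) via the Stirling-number/binomial-theorem identity, substitute into the preceding corollary to convert each $B_{s-r,p+1}(\beta;k)$ into a combination of bounded divisor sums, and then shift $m\mapsto m+1$ in the second piece of $S_{4}$, leaving the $m=0$ residue as the isolated $\sigma_{\alpha-1-u,p+1}(k)$ term. Your write-up is in fact more explicit than the paper's about the index realignment and the harmless extension of the $k$-range.
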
 

\begin{proof} 
We can expand the forms of the binomial coefficients 
that are coefficients of the summands of the divisor sums that define 
the functions, $B_{k,m}(\alpha; n)$, in the 
previous corollary. 
For integers $j \geq 0$, the binomial coefficient $\binom{r}{j}$ 
has a degree-$j$ polynomial expansion in $r$ weighted 
by the Stirling numbers of the first kind that naturally arise in expanding 
the rising and falling factorial functions. 
This interpretation allows us to write the inner sum terms defining the functions in 
\eqref{eqn_modified_divisorFns_with_bdd_divs} as polynomials in $n / d$. 

Namely, suppose that $t$ is an indeterminate and that the non-negative integers 
$m, k$ are fixed. Then we readily see the following identity: 
\begin{align*} 
\binom{t-m+k}{k} & = \sum_{j=0}^k \gkpSI{k}{j} \frac{(-1)^{k-j}}{k!} (t-m+k)^j \\ 
     & = 
     \sum_{r=0}^k \left(\sum_{j=0}^k \gkpSI{k}{j} \binom{j}{r} 
     \frac{(-1)^{k-r} (m-k)^{j-r}}{k!}\right) t^r. 
\end{align*} 
The rest of the proof of the theorem involved 
rearranging terms in the first expansions from 
Corollary~\ref{cor_S4sx_S8sx_exps} 
and shifting the indices $m$ from the previous equation 
upward by one to adjust the resulting formula. 
\end{proof} 

\begin{remark}[Simplifications of the multiple summation formulas]
It is well known that the Stirling number triangles satisfy inversion 
(orthogonality) relations 
of the following forms for any integers $m,n \geq 0$ \cite[\S 6.1]{GKP}: 
\begin{align*} 
\sum_{k=0}^{n} \gkpSI{n}{k} \gkpSII{k}{m} (-1)^{n-k} & = \delta_{m,n} \\ 
\sum_{k=0}^{n} \gkpSII{n}{k} \gkpSI{k}{m} (-1)^{n-k} & = \delta_{m,n}. 
\end{align*}
These properties combined with established bivariate triangular recurrence relations 
that characterize each of these triangles \cite[\cf \S 6.1]{GKP} 
suggest that in general there is much cancellation to be expected 
in the multiple summation formulas we defined above. 

An examination of the examples from 
the introduction confirms this intuition for a few small special cases of the theorem above. 
We take away from this observation that the formulas for $\sigma_{\alpha}(x)$ 
defined in terms of the last few quasi-polynomially scaled finite sums over the bounded-index 
variants of these functions convey new, significant substructure to these functions. 
The next section suggests interpretations of the formulas we have proved within this 
subsection along the lines of the characteristic expansions we noted result in the examples 
outlined as applications in the introduction to this article. 
\end{remark} 

\subsection{Corollaries and applications of the new identities} 

\begin{cor}[Exact Formulas for the generalized sums of divisors functions] 
\label{cor_LHSSigmaAlphan_Exact_Formula_exprs} 
For real-valued $\alpha \geq 0$, any integer $s \geq 2$, and all 
$x \geq 1$, we have that 
\begin{align*} 
\binom{x}{s} \sigma_{\alpha}(x) & = \frac{1}{s!} \left( 
     S_{01,s,x}^{(\alpha)} + S_{48,s,x}^{(\alpha)} - 
     S_{48,s,x-1}^{(\alpha)} + s \cdot S_{48,s-1,x-1}^{(\alpha)} 
     \right). 
\end{align*} 
\end{cor}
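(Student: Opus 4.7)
The plan is to derive the claimed exact formula by applying Proposition~\ref{prop_KeyProp_sthDerivsOfLSeriesExps} at three index pairs---$(s,x)$, $(s,x-1)$, and $(s-1,x-1)$---and combining these instances via a discrete telescoping argument that isolates $\sigma_{\alpha}(x)$ out of the summatory function $\Sigma_{\alpha}(x) := \sum_{k=1}^{x} \sigma_{\alpha}(k)$. The three $L$-terms will supply the $\sigma_{\alpha}(x)$ on the left-hand side, while a parallel three-term manipulation of the $S_{00}$ pieces will collapse them into the single quantity $S_{01,s,x}^{(\alpha)}$, leaving the $S_{48}$ pieces to ride along as they are.

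First I would observe that $L_{s,x}^{(\alpha)}$ admits a very direct closed form, independent of the more elaborate expansion stated in Definition~\ref{def_SeriesCoeffs_ShorthandNotation}(i). Expanding the Cauchy product $F_{\alpha}(q)/(1-q) = \sum_{n\geq 1} \Sigma_{\alpha}(n)\,q^{n}$ and applying the elementary identity $q^{s} D^{(s)}[q^{n}] = \frac{n!}{(n-s)!} q^{n}$ yields $L_{s,x}^{(\alpha)} = s!\binom{x}{s}\Sigma_{\alpha}(x)$. Pascal's identity $\binom{x}{s} = \binom{x-1}{s} + \binom{x-1}{s-1}$ together with the recurrence $\Sigma_{\alpha}(x) = \Sigma_{\alpha}(x-1) + \sigma_{\alpha}(x)$ then gives, after a short rearrangement, the clean telescoping relation $L_{s,x}^{(\alpha)} - L_{s,x-1}^{(\alpha)} - s\,L_{s-1,x-1}^{(\alpha)} = s!\binom{x}{s}\sigma_{\alpha}(x)$. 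Next I would perform the analogous reduction on the $S_{00}$ contributions. Setting $H(q) := \sum_{i \geq 1} i^{\alpha-1} q^{i}/(1-q)$ and $G(q) := H(q)/(1-q)$, the algebraic relation $G = H + q\,G$ combined with Leibniz's rule $D^{(s)}[qG] = q\,D^{(s)}[G] + s\,D^{(s-1)}[G]$ produces the operator identity $q^{s} D^{(s)}[G] - q^{s+1} D^{(s)}[G] - s\,q^{s} D^{(s-1)}[G] = q^{s} D^{(s)}[H]$. Extracting the coefficient of $q^{x}$ and recognising $[q^{x}]\,q^{s+1} D^{(s)}[G] = S_{00,s,x-1}^{(\alpha)}$ and $[q^{x}]\,q^{s} D^{(s-1)}[G] = S_{00,s-1,x-1}^{(\alpha)}$ after the obvious index shifts yields the identity $S_{00,s,x}^{(\alpha)} - S_{00,s,x-1}^{(\alpha)} - s\,S_{00,s-1,x-1}^{(\alpha)} = S_{01,s,x}^{(\alpha)}$.

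Finally I would substitute the decomposition $L_{s,\cdot}^{(\alpha)} = S_{00,s,\cdot}^{(\alpha)} + S_{48,s,\cdot}^{(\alpha)}$ from Proposition~\ref{prop_KeyProp_sthDerivsOfLSeriesExps} into the telescoping identity for $L$. The three $S_{00}$ terms collapse to $S_{01,s,x}^{(\alpha)}$ by the previous paragraph, the three $S_{48}$ terms are carried through, and dividing by $s!$ produces the stated formula. The main obstacle is not conceptual but a matter of careful bookkeeping: one must keep the index shifts $x \mapsto x-1$ and $s \mapsto s-1$ consistently aligned across the three applications of the proposition, the Leibniz expansion of $D^{(s)}[qG]$, and the two coefficient-extraction identities, so that the $S_{00}$ telescope cleanly to $S_{01,s,x}^{(\alpha)}$ and the signs of the $S_{48}$ combination match exactly those forced by the Pascal recursion on $\binom{x}{s}\,\sigma_{\alpha}(x)$.
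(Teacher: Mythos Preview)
Your approach is essentially the same as the paper's: the derivative identity $D^{(j)}[(1-q)f] = (1-q)f^{(j)} - jf^{(j-1)}$ that the paper invokes is exactly your Leibniz/Pascal telescoping, applied once with $f = F_{\alpha}/(1-q)$ to isolate $s!\binom{x}{s}\sigma_{\alpha}(x)$ from the $L$-terms and once with $f = G$ to collapse the three $S_{00}$ terms into $S_{01,s,x}^{(\alpha)}$. Note that both your computation and the paper's own identity force the final term to appear with sign $-s\,S_{48,s-1,x-1}^{(\alpha)}$ rather than $+s\,S_{48,s-1,x-1}^{(\alpha)}$, so the displayed formula in the corollary evidently carries a sign typo; your derivation is correct as written.
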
 
\begin{proof}  
Fix some natural number $j \geq 1$. 
We see that for any function, $f(q)$, that is taken to be $k$-order differentiable 
for all $1 \leq k \leq j$, we obtain the following 
expansion of the higher order derivatives of $f$ by induction: 
\begin{align} 
\label{eqn_Djqm1Timesfq_DerivIdent_stmt} 
D^{(j)}\left[(1-q) f(q)\right] & = 
     (1-q) f^{(j)}(q) - j f^{(j-1)}(q),\ j \geq 1. 
\end{align} 
Theorem~\ref{theorem_FiniteSum_MainInitialThmStmt} is 
proved starting from the results in 
Proposition~\ref{prop_KeyProp_sthDerivsOfLSeriesExps} from the previous subsection. 
The idea in stating the proposition is to scale the Lambert series 
generating function, $L_{\alpha}(q)$, by a multiple (factor) of $(1-q)^{-1}$ prior to 
differentiating the series. It happens that the resulting formulas for the 
$s^{th}$-order derivatives of the series after the rescaling are less 
complicated in form to state than those corresponding to the original Lambert series 
generating function for $\sigma_{\alpha}(x)$. 
If we then revisit our identities for the $s^{th}$ derivatives of 
the formula in \eqref{eqn_key_LSeries_exp_ident} and subsequently re-interpret our results 
using the expansion we observe in \eqref{eqn_Djqm1Timesfq_DerivIdent_stmt}, we find 
stated exact formulas for $\sigma_{\alpha}(n)$ when $j = s$. 
\end{proof} 

For fixed $x, k \in \mathbb{Z}^{+}$, we can define component functions in our 
results by setting 
\begin{align} 
\label{eqn_example_SxsxkAlpha_ModCfDefs} 
\sum_{k=1}^{x-1} S_{01,s,x,k}^{(\alpha)} & \quad \leftmapsto: \quad S_{01,s,x}^{(\alpha)} \\ 
\notag 
\sum_{k=1}^{x-1} S_{4,s,x,k}^{(\alpha)} & \quad \leftmapsto: \quad S_{4,s,x}^{(\alpha)} \\ 
\notag 
\sum_{k=1}^{x-1} S_{8,s,x,k}^{(\alpha)} & \quad \leftmapsto: \quad S_{8,s,x}^{(\alpha)}, 
\end{align} 
where we write the shorthand 
$$S_{48,s,x,k}^{(\alpha)} := S_{4,s,x,k}^{(\alpha)} + S_{8,s,x,k}^{(\alpha)}.$$ 
The motivation for the definition in terms of $k,x$ given in 
\eqref{eqn_example_SxsxkAlpha_ModCfDefs} is to provide a mechanism for 
simplifying the complicated nested summations 
exactly expressing the characteristic leading coefficients in the new formulas. 

We then seek exact closed-form expressions for $\sigma_{\alpha}(x)$ 
involving sums over the bounded-index functions 
$\sigma_{\alpha-j,m}(k)$ for integers $j$ and $m \geq 1$. 
In particular, we may use the intermediate definitions of these 
sums using the shorthand notation 
from \eqref{eqn_example_SxsxkAlpha_ModCfDefs} to express the 
result from Corollary~\ref{cor_LHSSigmaAlphan_Exact_Formula_exprs} as\footnote{ 
     For positive integers $\beta \geq 0$, we have formulas for the 
     polynomial power sum functions expanded by the \emph{Bernoulli numbers} and 
     \emph{Bernoulli polynomials} given by \cite[\S 24.4(iii)]{NISTHB} 
     \begin{align*} 
     \sum_{k=0}^{n} k^{\beta} & = \frac{B_{\beta+1}(n+1) - B_{\beta+1}}{\beta+1}. 
     \end{align*} 
} 
\begin{align} 
\notag 
\binom{x}{s} \sigma_{\alpha}(x) & \phantom{:} = 
     S_{48,s,x,x}^{(\alpha)} + 
     \sum_{k=1}^{x-1} \frac{1}{s!} \left( 
     S_{01,s,x,k}^{(\alpha)} + S_{48,s,x,k}^{(\alpha)} - 
     S_{48,s,x-1,k}^{(\alpha)} + S_{48,s-1,x-1,k}^{(\alpha)}\right)  \\ 
\label{eqn_SigmaAlphan_RHS_SumExpr_SumTauPlusRho} 
     & := 
     \rho_s^{(\alpha)}(x) + \sum_{k=1}^{x-1} \tau_{s,x}^{(\alpha)}(k). 
\end{align} 
In general, we can prove formally by induction that for functions, 
$p_{i,s,\beta}(k, x)$ and $q_{i,s,\beta}(x)$ polynomial in 
their respective inputs, 
we have expansions of the component functions to the formulas in 
\eqref{eqn_SigmaAlphan_RHS_SumExpr_SumTauPlusRho} that satisfy the following forms: 
\begin{align} 
\label{eqn_TauRhosxkx_GenFormPolyCoeffExps} 
\tau_{s,x}^{(\alpha)}(k) & = \sum_{\beta=-s}^{s-1} \left[
    p_{1,s,\beta}(k, x) \cdot \sigma_{\alpha+\beta}(k) + 
    \sum_{m=1}^{s+1} 
    p_{2,s,\beta}(k, x) \cdot \sigma_{\alpha+\beta,m}(k) 
    \right] \\ 
\notag 
\rho_{s}^{(\alpha)}(x) & = \sum_{\beta=-s}^{s-1} \left[ 
     q_{1,s,\beta}(x) \cdot \sigma_{\alpha+\beta}(x) + 
     \sum_{m=1}^{s+1} 
     q_{2,s,\beta}(x) \cdot \sigma_{\alpha+\beta,m}(x) 
     \right]. 
\end{align} 

\section{Complete proofs of the new results} 
\label{Section_appendix_proofs_of_lemmas} 

\subsection{Proofs of key lemmas from the introduction} 

\begin{lemma}
\label{lemma_LambertSeriesTerm_Sumjim2_Exps} 
For any indeterminate $q$ and integers $i \geq 2$, we have the following 
expansion: 
\begin{align} 
\label{eqn_lemma_LambertSeriesTerm_Sumjim2_Exps_stmt_v1}
\frac{1}{1-q^i} & = \frac{1}{i}\left(\frac{1}{1-q}+\sum_{j=0}^{i-2} 
     \frac{(i-1-j) q^j (1-q)}{1-q^i}\right). 
\end{align} 
\end{lemma} 
\begin{proof} 
We start by noticing that $(1-q^i) / (1-q) = 1+q+\cdots+q^{i-1}$ by a finite 
geometric series expansion. If we combine denominators of the two 
fractions on the right-hand-side of 
\eqref{lemma_LambertSeriesTerm_Sumjim2_Exps} 
we obtain that 
\begin{align*} 
\frac{1+q+\cdots+q^{i-1} + (1-q) \sum_{j=0}^{i-2} (i-1-j) q^j}{1-q^i}. 
\end{align*} 
Then by simplifying the summation terms in the numerator of the above 
expansion we see that 
\begin{align*} 
(1-q) \sum_{j=0}^{i-2} (i-1-j) q^j & = i-1 - q^{i-1} + 
     \sum_{j=0}^{i-2} \left((i-1-j) - (i-2-j)\right) q^{j+1} \\ 
     & = 
     i - \sum_{j=0}^{i-1} q^j = i - \frac{1-q^i}{1-q}. 
\end{align*} 
Finally, dividing through by $1-q^i$ and taking a difference of terms proves the result. 
\end{proof} 

Lemma~\ref{lemma_sthDerivsOfLambertSeries} 
stated in the introduction provides another Lambert series transformation 
identity which we will need to prove to complete the argument in the 
proof of the proposition given in the next subsection. 

\begin{proof}[Proof of Lemma~\ref{lemma_sthDerivsOfLambertSeries}] 
\label{lemma_sthDerivsOfLambertSeries-ProofRef} 
We prove the first identity stated in (i) of the lemma by induction on $s$. 
When $s := 0$, the Stirling number terms are identically equal to one, and so 
we obtain $q^i \cdot (1-q^i)^{-1}$ on both sides of the equation. Next, we suppose that 
(i) is correct for all $t < s$ for some $s \geq 1$. 
That is, if we let 
\[
\Delta_{t,i}(q) := q^t \cdot D^{(t)}\left[\frac{q^i}{1-q^i}\right], 
\]
then we have that our inductive hypothesis is true in the form of 
\[
\tag{IH}
\Delta_{t,i}(q) = \sum_{m=0}^{t} \sum_{k=0}^{m} \gkpSI{t}{m} \gkpSII{m}{k} 
     \frac{(-1)^{t-k} k! \cdot i^m \cdot q^i}{(1-q^i)^{k+1}}, \forall 0 \leq t < s. 
\]
We have two triangular recurrence relations satisfied by each 
triangle of the first and second kinds. These recurrences are stated respectively 
as follows for integers $0 \leq k \leq n$ \cite[\S 6.1]{GKP}:
\begin{align*} 
\gkpSI{n}{k} & = (n-1) \gkpSI{n-1}{k} + \gkpSI{n-1}{k-1} + \delta_{n,0} \delta_{k,0} \\ 
\gkpSII{n}{k} & = k \gkpSII{n-1}{k} + \gkpSII{n-1}{k-1} + \delta_{n,0} \delta_{k,0}. 
\end{align*} 
We also notice by direct calculation that for integers $i \geq 1$ and $s \geq 0$, we have 
\begin{align*} 
q^{s+1} \cdot D\left[\frac{1}{q^s \cdot (1-q^i)^{k+1}}\right] & = 
     \frac{i(k+1) q^i}{(1-q^i)^{k+2}} - \frac{s}{(1-q^i)^{k+1}}. 
\end{align*} 
We have that 
\[
\frac{1}{q^s} \cdot \frac{q^i}{(1-q^i)^{k+1}} = 
     q^{-s}\left(\frac{1}{(1-q^i)^{k+1}} - \frac{1}{(1-q^i)^k}\right). 
\]
Let the shorthand $t_{m,k} \equiv t_{m,k}(i, q)$ be defined as 
\[
t_{m,k} := \frac{k \cdot q^i \cdot i^{m}}{(1-q^i)^{k+1}}. 
\]
Applying the triangular recurrence relation for the Stirling numbers of the first kind 
to the expansion of the (IH) by the identities in the last two equations leads to 
cancellation in the following form:
\begin{align*} 
\Delta_{s+1,i}(q) & = 
     q^{s+1} \cdot \frac{d}{dq}\left[\frac{1}{q^s} \cdot \Delta_{s,i}(q)\right] \\ 
     & = q^{s+1} \cdot \frac{d}{dq}\left[\frac{1}{q^{s-i}} \times \sum_{m=0}^{s} \sum_{k=0}^{m} 
     \gkpSI{s}{m} \gkpSII{m}{k} \frac{(-1)^{s-k} k! i^m}{(1-q^i)^{k+1}}\right] \\ 
     & = \sum_{m=0}^{s} \sum_{k=0}^{m} \gkpSI{s+1}{m+1} \gkpSII{m}{k} 
     \bigl(
     t_{m+1,k+1} - t_{m+1,k} 
     \bigr) (-1)^{s+1-k} k!. 
\end{align*} 
Then by applying summation by parts, the recurrence relation for the 
Stirling numbers of the second kind, and shifting the index of summation 
over $k$, we obtain that 
\begin{align*} 
\Delta_{s+1,i}(q) & = \sum_{m=0}^{s} \sum_{k=0}^{m} \gkpSI{s+1}{m+1} \gkpSII{m+1}{k+1} \times 
    t_{m+1,k+1} (-1)^{s+2-k} \cdot k! \\ 
    & \sum_{m=1}^{s+1} \sum_{k=1}^{m} \gkpSI{s+1}{m} \gkpSII{m}{k} \times 
    t_{m,k} (-1)^{s+1-k} \cdot (k-1)!. 
\end{align*} 
The identity in (i) finally follows by summing over $0 \leq k \leq m \leq s+1$ where 
$\gkpSI{s+1}{0} \equiv 0$ for all $s \geq 0$ \citep[\cf \S 26.8]{NISTHB}. 
The second identity stated in (ii) of the lemma is a consequence of (i) through 
an application of the binomial theorem in combination with the 
following identity:
\[
\frac{1}{(1-q^i)^{k+1}} = \frac{1}{1-q^i}\left[1+\frac{1}{1-q^i}\right]^{k}. 
\]
The complete form of the second identity follows by interchanging the order of summation. 
\end{proof} 

Lemma~\ref{lemma_SumOverjPowqj_SumIdents_v2} 
is used as another transformation of the higher-order 
derivatives of the Lambert series identity in 
\eqref{eqn_key_LSeries_exp_ident} that we have proved in 
Lemma~\ref{lemma_LambertSeriesTerm_Sumjim2_Exps} above. 
We prove (i) from this lemma in complete detail and then leave the 
similar proof of part (ii) as an exercise that follows 
easily by adapting the first argument. 

\begin{proof}[Proof of Equation (i) in Lemma~\ref{lemma_SumOverjPowqj_SumIdents_v2}] 
\label{lemma_SumOverjPowqj_SumIdents_v2-ProofRef} 
We first define the two sums, $S_{i,p,n}$, for $i = 1,2$ and positive integers 
$p, n \geq 1$ by 
\begin{align*} 
    S_{1,p,n} & := \sum_{k=0}^p \binom{p}{k} \frac{(-1)^{k+1} q^{n+1+k} (n+1)!}{ 
     (n-p)! (n+1-p+k) (1-q)^{p+1}} \\ 
S_{2,p,n} & := \sum_{j=0}^n \frac{j!}{(j-p)!} q^j - \frac{p! q^p}{(1-q)^{p+1}}. 
\end{align*} 
Provided that $|z|,|qz| < 1$, the generating function for the second type of sum defined above 
follows by differentiation of the 
geometric series function in the form of 
\begin{align*} 
S_p(z) & = \sum_{n \geq 0} S_{2,p,n} z^n = 
    \frac{1}{1-z} \times \sum_{n \geq 0} \binom{n}{p} p! (qz)^n - 
    \frac{p! q^p}{(1-q)^{p+1} (1-z)} \\ 
    & = \frac{z^p}{1-z} \times \frac{d^{(p)}}{dz^{(p)}}\left[\frac{1}{1-qz}\right] - 
    \frac{p! q^p}{(1-q)^{p+1} (1-z)} \\ 
\tag{iii} 
    & =
    \frac{p! (qz)^p}{(1-z) (1-qz)^{p+1}} 
     - \frac{p! q^p}{(1-q)^{p+1} (1-z)}. 
\end{align*} 
Next, we can compute using Carsten Schneider's \texttt{Sigma} package for 
\textit{Mathematica}\footnote{ 
     Available for non-commercial use on the 
     \emph{RISC} software group website: 
     \url{https://www3.risc.jku.at/research/combinat/software/Sigma/}. 
} 
that the first sums defined above satisfy the following homogeneous recurrence relation: 
\begin{align*} 
\tag{iv} 
 & \left(n(p+1)q-p(p+1)q\right) S_{1,p,n} + \left(n(q-1)+p-2q-2pq\right) S_{1,p+1,n} \\ 
 & \phantom{\quad\ } + 
     (1-q) S_{1,p+2,n} = 0. 
\end{align*} 
We complete the proof by showing that the sums, $S_{2,p,n}$ also satisfy the 
recurrence relation in (iv) for all integers $p \geq 1$, and that each of these 
sums produce the same formulas in $q$ and $n$ for the first few cases of $p \geq 1$. 
To show that the second sums satisfy the recurrence in (iv), we perform the 
following computations using the generating functions for the 
sequence in (iii) with \emph{Mathematica}: 
\begin{align*} 
\left((p+1)qz D-p(p+1)q\right) \cdot S_p(z) & + 
     \left((q-1)z D + p-2q-2pq\right) \cdot S_{p+1}(z) \\ 
     & + 
     \left(1-q\right) \cdot S_p(z) = 0. 
\end{align*} 
Finally, we compute the first few special cases of these sums explicitly 
using symbolic summation in the forms of 
\begin{align*} 
S_{1,1,n} = S_{2,1,n} & = \frac{q^{n+1}}{(1-q)^2}\left(qn-(n+1)\right) \\ 
S_{1,2,n} = S_{2,2,n} & = \frac{q^{n+1}}{(1-q)^3}\left(-n(n-1)q^2+2(n+1)(n-1)q 
     -(n+1)n\right) \\ 
S_{1,3,n} = S_{2,3,n} & = \frac{q^{n+1}}{(1-q)^4}\bigl(n(n-1)(n-2)q^3-3(n+1)(n-1)(n-2)q^2 \\ 
     & \phantom{=\frac{q^{n+1}}{(1-q)^4}\bigl(\ } + 
     3(n+1)n(n-2) q -(n+1)n(n-1)\bigr). 
\end{align*} 
Thus we have shown that the two sums satisfy the same homogeneous recurrence 
relations and initial conditions, and so must be equal for all $p, n \geq 1$, hence 
completing the proof of our result. 
\end{proof} 

\begin{proof}[Proof of Lemma~\ref{lemma_formulas_for_special_series_coeffs}: 
              A More General Result] 
\label{lemma_formulas_for_special_series_coeffs-ProofRef} 
The proofs of all three results are almost identical. 
We prove a more general result, of which the three identities are special cases. 
Namely, let $G(q) := \sum_{n} g_n q^n$ formally denote the ordinary generating 
function of an arbitrary sequence, and suppose that $m \geq 0$. Then we claim that 
\begin{align} 
\tag{iv} 
[q^x] q^s D^{(s)}\left[\frac{G(q)}{(1-q)^{m+1}}\right] 
     &  = 
     \sum_{r=0}^s \sum_{k=0}^x 
     \binom{s}{r} \binom{x-k+m}{s-r+m} \frac{(s-r+m)! k!}{m! \cdot (k-r)!} g_k. 
\end{align} 
To prove the claim, we require each of the results listed below 
where $f(q)$ and $g(q)$ denote functions which are each differentiable up to 
order $s$ for some integers $s, m \geq 0$. 
\begin{align} 
\label{eqn_lemma_SCCoeffs_LList_of_results} 
q^s \cdot D^{(s)}\left[f(q) g(q)\right] & = \sum_{r=0}^s \binom{r}{s} 
     q^{r} f^{(r)}(q) \times q^{s-r} g^{(s-r)}(q) \\ 
\notag 
q^s D^{(s)}\left[\frac{1}{(1-q)^{m+1}}\right] & = 
     \frac{q^s \cdot (m+1+s)!}{m! \cdot (1-q)^{m+s+1}} \\ 
\notag 
[q^{x}] \frac{1}{(1-q)^{k+1}} & = \binom{x+k}{k} \\ 
\notag 
[q^x] q^s \cdot G^{(s)}(q) & = \frac{x!}{(x-s)!} \cdot g_x 
\end{align} 
Once we have \eqref{eqn_lemma_SCCoeffs_LList_of_results}, 
our generalized result claimed in (iv) follows simply as 
a matter of concatenation of the double sums. 
The cited results in the previous equations 
are either well-known coefficients of power series (as in the last two 
cases), are established formulas, or are trivial to prove by induction 
(as in the second series identity case). 
Finally, we see that the identity in (i) corresponds to the special case where 
$(g_n, m) := (\sigma_{\alpha}(n), 0)$; in (ii) to the special case where 
$(g_n, m) := (n^{\alpha-1}, 1)$; and in (iii) to the case of out claim from (iv) where 
$(g_n, m) := (n^{\alpha-1}, 0)$. 
\end{proof} 

\subsection{Proof of the key proposition from Section~\ref{subSection_prop_KeyProp_sthDerivsOfLSeriesExps}} 
\label{subSection_ProofOfKeyLSProp}

With the key lemmas from the previous subsection at our disposal, we are finally 
able to complete the more involved, technical proof of 
Proposition~\ref{prop_KeyProp_sthDerivsOfLSeriesExps}. 

\begin{proof}[Proof of Proposition~\ref{prop_KeyProp_sthDerivsOfLSeriesExps}] 
\label{prop_KeyProp_sthDerivsOfLSeriesExps-ProofRef} 
We begin by defining the following shorthand for the higher-order $s^{th}$ 
derivatives of inner summation terms in the 
expansion of our key Lambert series identity in \eqref{eqn_key_LSeries_exp_ident} 
(see Lemma~\ref{lemma_LambertSeriesTerm_Sumjim2_Exps}): 
\begin{align*} 
\Sum_{s,i}(q) & := \sum_{j=0}^{i-2} q^s D^{(s)}\left[ 
     \frac{(i-j-1) q^{i+j}}{(1-q^i)}\right]. 
\end{align*} 
If we then apply Lemma~\ref{lemma_sthDerivsOfLambertSeries} 
in combination with the generalized product 
rule identity stated in \eqref{eqn_lemma_SCCoeffs_LList_of_results}, we see that 
\begin{align} 
\notag 
\Sum_{s,i}(q) & = \sum_{j=0}^{i-2} \sum_{r=0}^s \sum_{p=0}^{s-r} 
     \sum_{m=0}^{s-r} \sum_{k=0}^m (i-(j+1)) \binom{s}{r} \gkpSI{s-r}{m} 
     \gkpSII{m}{k} \binom{s-r-k}{p} \frac{j!}{(j-r)!} \times \\ 
\label{eqn_proof_tag_Sumsi_last_expr_v1}
     & \phantom{=\sum\sum\sum\sum\sum\ } \times 
     \frac{(-1)^{s-r-k-p} k! \cdot i^m q^{(p+1)i+j}}{(1-q^i)^{p+1}}. 
\end{align} 
To simplify notation for the multiple sum formulas we obtain in the next 
few formulas, we define the shorthand notation for the next sums where we denote 
$\Sigma_k f$ to be the multiple sum defined by $\Sigma_k$ over the function $f$ 
taken such that $f$ has at least $k$ parameters indexed naturally by the 
summations in $\Sigma_k$. The usage below the next definitions 
makes clear what we mean by using this convention.  
\begin{align*} 
\Sigma_4 f & \quad :\longmapsto \quad 
     \sum_{r=0}^s \sum_{p=0}^{s-r} \sum_{m=0}^{s-r} \sum_{k=0}^m f(r, p, m, k) \\ 
\Sigma_5 f & \quad :\longmapsto \quad 
     \sum_{r=0}^s \sum_{p=0}^{s-r} \sum_{m=0}^{s-r} \sum_{k=0}^m \sum_{n=0}^{r+1} 
     f(r, p, m, k, n) \\ 
\Sigma_8 f & \quad :\longmapsto \quad 
     \sum_{r=0}^s \sum_{p=0}^{s-r} \sum_{m=0}^{s-r} \sum_{k=0}^m \sum_{n=0}^{r+1} 
     \sum_{w=0}^s \sum_{m_1=0}^{r+1-n} \sum_{m_2=0}^n 
     f(r. p, m, k, n, m_1, m_2). 
\end{align*} 
When we utilize the notation defined in the previous equations to express the 
last expansion of $\Sum_{s,i}(q)$ given in \eqref{eqn_proof_tag_Sumsi_last_expr_v1}, 
we can apply Lemma~\ref{lemma_SumOverjPowqj_SumIdents_v2} to sum over the previous 
index $j$. This procedure yields the following exact expression for these sums: 
\begin{align*} 
\Sum_{s,i}(q) & = \Sigma_4\ \binom{s}{r} \gkpSI{s-r}{m} 
     \gkpSII{m}{k} \binom{s-r-k}{p} 
     \frac{(-1)^{s-r-k-p} k! r! q^{(p+1)i+r}}{(1-q^i)^{p+1}} \times \\ 
     & \phantom{=\Sigma_4\ \ } \times 
     \left( 
     \frac{i^{m+1}}{(1-q)^{r+1}} - \frac{(r+1) i^m}{(1-q)^{r+2}} 
     \right) \\ 
     & \phantom{=\ } + 
     \Sigma_5\ \binom{s}{r} \gkpSI{s-r}{m} \gkpSII{m}{k} \binom{s-r-k}{p} 
     \frac{(-1)^{s-r-k-p+n+1} k! q^{(p+2)i+n-1}}{(1-q^i)^{s-r+1}} \times \\ 
     & \phantom{=\Sigma_5\ \ } \times 
     \left( 
     \textstyle{
     \binom{r}{n} \frac{(i-1)! i^{m+1}}{(i-2-r)! (i-1-r+n) (1-q)^{r+1}} - 
     \binom{r+1}{n} \frac{i! i^m}{(i-2-r)! (i-1-r+n) (1-q)^{r+2}}
     } 
     \right). 
\end{align*} 
The last key jump in writing the formula in the previous equation in the 
form claimed by the proposition is to expand and re-write the factorial terms 
involving $i$ in $\Sigma_5$ as a sum (or multiple sum) over powers of $i$. 
To perform this change in the formula, we appeal to the expansions of 
factorial function products by the Stirling numbers of the first kind 
given by 
\begin{align*} 
 & \frac{i!}{(i-1-r)! (i-r+n)} = \prod_{j=0}^{r-n-1} (i-j) \times 
     \prod_{j=r-n+1}^r (i-j) \\ 
     & \qquad = 
     \left(\sum_{m=0}^{r-n} \gkpSI{r-n}{m} (-1)^{r-n-m} i^m\right) \times 
     \left(\sum_{m=0}^{n} \gkpSI{n}{m} (-1)^{n-m} (i+n-1-r)^m\right) \\ 
     & \qquad = 
     \sum_{v=0}^r \sum_{m_1=0}^{r-n} \sum_{m_2=0}^n 
     \gkpSI{r-n}{m_1} \gkpSI{n}{m_2} \binom{m_2}{v-m_1} (-1)^{r+m_1+m_2}
     (n-1-r)^{m_1+m_2-v} i^v. 
\end{align*} 
We are now able to define equivalent forms of the 
summation functions depending on $q$ and $i$ in 
Definition~\ref{def_Sum4sqi_Sum8sqi_init_defs} as 
\begin{align*} 
\Sum_{4,s}(q, i) & = \Sigma_4\ \binom{s}{r} \gkpSI{s-r}{m} \gkpSII{m}{k} 
     \binom{s-r-k}{p} \frac{(-1)^{s-r-k-p} k! r! 
     q^{(p+1)i+r}}{(1-q^i)^{s-r+1}} \times \\ 
     & \phantom{=\Sigma_4\ \ } \times 
     \left( 
     \frac{i^{m+1}}{(1-q)^{r+1}} - \frac{(r+1) i^m}{(1-q)^{r+2}} 
     \right) \\ 
\Sum_{8,s}(q, i) & = \Sigma_8\ \binom{s}{r} \gkpSI{s-r}{m} \gkpSII{m}{k} 
     \binom{s-r-k}{p} \gkpSI{r+1-n}{m_1} \gkpSI{n}{m_2} \binom{m_2}{w-m-m_1} 
     \times \\ 
     & \phantom{=\Sigma_8\ \ } \times 
     \frac{(-1)^{s-k-p+n+m_1+m_2} k! 
     q^{(p+2)i+n-1} (n-2-r)^{m+m_1+m_2-w} i^w}{(1-q^i)^{s-r+1}} \times \\ 
     & \phantom{=\Sigma_8\ \ } \times 
     \left( 
     \binom{r}{n} \frac{1}{(1-q)^{r+1}} - \binom{r+1}{n} \frac{1}{(1-q)^{r+2}} 
     \right). 
\end{align*} 
The proof is completed by reconciling the notation from 
Definition~\ref{def_Multiple_Init_Coeff_Sums} 
together with an application of 
Lemma~\ref{lemma_formulas_for_special_series_coeffs} to the 
remaining term in the sum from \eqref{eqn_key_LSeries_exp_ident}. 
\end{proof} 

\section{Conclusions} 
\label{Section_Concl} 

The identities we have proved in the article 
are derived from elementary series transformations and 
operations on truncated partial sums of Lambert series generating functions. 
The resulting new identities provide exact formulas for the classical 
generalized sum-of-divisors 
functions, $\sigma_{\alpha}(n)$, expanded by polynomial multiples of the related 
bounded-index divisor sums defined by the functions, 
$\sigma_{\alpha,m}(n)$, in 
\eqref{eqn_BoundedIndex_DivisorFuncs_def_v2}. 
Based on experimental computational procedures and inspection, 
there do not appear to be correspondingly elegant identities that can be derived 
from operations in more general Lambert series expansions over other 
multiplicative functions, $f(i)$, of the form 
\[
\widehat{L}_{f}(q) := \sum_{n \geq 1} \frac{f(n) q^n}{1-q^n} = \sum_{m \geq 1} \left( 
     \sum_{d|m} f(d) \right) q^m, |q| < 1, 
\]
when $f(n) \neq n^{\alpha}$ for some $\alpha \geq 0$. 
We can express the formulas in 
\eqref{eqn_TauRhosxkx_GenFormPolyCoeffExps} 
for positive real $\beta > 0$ by the symmetric negative-order identity for the 
generalized sum of divisors functions which shows that 
$\sigma_{-\beta}(n) = n^{-\beta} \cdot \sigma_{\beta}(n)$. 
Non-trivial simplifications of the bounded-index divisor sum functions, 
$\sigma_{\beta,m}(n)$, are less obvious to state for increasing integer 
parameters $m \geq 3$. 

\subsection{Future directions and topics to consider} 

One possible interpretation of the sums we have obtained 
is to formulate how the divisors corresponding to the left-hand-side 
expansions of $\sigma_{\alpha}(n)$ are partitioned by the modified 
divisor sum functions and the observed polynomial multiples of these functions. 
For example, consider the following variant of the bounded-index divisor functions 
defined in \eqref{eqn_BoundedIndex_DivisorFuncs_def_v2} for integer parameters $1 \leq m_1 < m_2 \leq n$: 
\[
\sigma_{\alpha,m_1,m_2}(n) := \sum_{\substack{d|n \\ 
     m_1 < d \leq m_2}} d^{\alpha}, 
     \alpha \in \mathbb{C}. 
\] 
For natural numbers $n \geq 1$, let the sequences 
$1=r_1^{(n)} < r_2^{(n)} < \cdots < r_{d(n)}^{(n)} = n$ denote the distinct divisors of 
$n$ in ascending order. 
It follows that we can partition the terms in the summation-based form of $\sigma_{\alpha}(n)$ into 
disjoint sets of divisors using the formula: 
\[
\sigma_{\alpha}(n) = \sum_{j=2}^{d(n)} \sigma_{\alpha,r_{j-1}^{(n)},r_{j}^{(n)}}(n). 
\]
We can also prove the following identity: 
\[
\sum_{m=1}^{n} \sigma_{\alpha,m}(n) = n \times \sum_{d|n} d^{\alpha-1} = n \cdot \sigma_{\alpha-1}(n).  
\] 
The last formula is perhaps more suggestive of the types of partition-related identities that we 
might try to invoke on the new expressions for $\sigma_{\alpha}(n)$ proved in this article. 

\renewcommand{\refname}{References}

\end{document}